\renewcommand{\thispagestyle}[1]{} % do nothing
\newtheorem{theorem}{Theorem}[section]
\newtheorem{corollary}[theorem]{Corollary}
\newtheorem{lemma}[theorem]{Lemma}
\newtheorem{proposition}[theorem]{Proposition}
\newtheorem{definition}[theorem]{Definition} \theoremstyle{remark}
\newtheorem{remark}[theorem]{Remark} 
\numberwithin{equation}{section} 
\newcommand{\comment}[1]{}
\newcommand{\R}{\mathbb R}
\newcommand{\EE}{\mathbb{E}}
\newcommand{\PP}{\mathbb{P}}
\newcommand{\eps}{\varepsilon}
\newcommand{\la}{\lambda}
\newcommand{\ls}{\leqslant}
\newcommand{\gr}{\geqslant}
\newcommand{\conv}{{\rm conv}}
\renewcommand{\span}{{\rm span}}
\providecommand{\Probb}[2]{\mathbb{P}_{#1}\left(#2\right)}
\providecommand{\abs}[1]{\lvert#1\rvert}
\providecommand{\norm}[1]{\left\lVert#1\right\rVert}
\begin{document}
\large \title{Volume of the polar of random sets and shadow systems}

\author{D. Cordero-Erausquin \and M. Fradelizi \\
\and 
  G. Paouris \and P. Pivovarov}

\date{November, 2013}

\maketitle
\begin{abstract}
We obtain optimal inequalities for the volume of the polar of random
sets, generated for instance by the convex hull of independent random
vectors in Euclidean space. Extremizers are given by random vectors
uniformly distributed in Euclidean balls. This provides a random
extension of the Blaschke-Santal\'o inequality which, in turn, can be
derived by the law of large numbers.  The method involves generalized
shadow systems, their connection to Busemann type inequalities, and
how they interact with functional rearrangement inequalities.
\end{abstract}

\section{Introduction}

A celebrated result of Blaschke and Santal\'o~\cite{Santalo} states
that among symmetric convex bodies $K$ of fixed volume in the
Euclidean space $(\R^n, |\cdot|)$, the volume of the polar body
$K^\circ$ is maximized by the Euclidean ball, and therefore also by
ellipsoids, by $SL_n$-invariance (precise definitions will be recalled
below, in \S 2); we refer to~\cite{MP_BS} for a proof based on Steiner
symmetrization.  In the present paper, we are interested in extending
such a result to random sets.  A typical example of such a random set
is given by the convex hull of the columns of a random matrix, for
which we can prove the following property.

\begin{theorem}
  \label{thm:random-santalo} 
  Let $N,n \ge 1$.  In the class of $N$-tuples $(X_1,\ldots, X_N)$ of
  independent random vectors in $\R^n$ whose laws have a density
  bounded by one, the expectation of the volume of the set
$$\big(\conv\{\pm X_1,\ldots, \pm X_N\}\big)^{\circ}$$ is maximized by
  $N$ independent random vectors uniformly distributed in the
  Euclidean ball $D_n\subset \R^n$ of volume one.
\end{theorem}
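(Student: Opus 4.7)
The strategy is to pass through two layers of reduction — from arbitrary densities to indicators of centrally symmetric sets of volume $1$, and then from such sets to Euclidean balls — with the analytic heart being a multi-parameter Busemann-type convexity for polar volumes along generalized shadow systems.

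First I would exploit the $\pm$-symmetry of the random body: since $\conv\{\pm X_1,\ldots,\pm X_N\}$ depends on each $X_i$ only through the unordered pair $\{X_i,-X_i\}$, replacing each density $f_i$ by its even part $\tilde f_i(x)=\tfrac12\bigl(f_i(x)+f_i(-x)\bigr)$ leaves the expectation unchanged while preserving $\tilde f_i\le 1$. Writing
\[
  \EE\,|K_N^\circ|=\int_{(\R^n)^N}\Phi(x)\prod_{i=1}^N f_i(x_i)\,dx,\qquad \Phi(x)=\bigl|(\conv\{\pm x_1,\ldots,\pm x_N\})^\circ\bigr|,
\]
the functional is multilinear in the $f_i$. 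A Choquet/Bauer-type extreme-point argument on the convex set of even densities bounded by $1$ of integral $1$ — whose extreme points are indicators of centrally symmetric sets of volume $1$ — reduces the theorem to showing that
\[
  J(A_1,\ldots,A_N):=\int_{A_1\times\cdots\times A_N}\Phi(x)\,dx
\]
is maximized, among centrally symmetric $A_i$ with $|A_i|=1$, when $A_i=D_n$ for every $i$.

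Second, I would establish this maximization by simultaneous Steiner symmetrization: for each $u\in S^{n-1}$,
\[
  J(A_1,\ldots,A_N)\le J(S_uA_1,\ldots,S_uA_N),
\]
where $S_u$ is Steiner symmetrization with respect to $u^\perp$. Central symmetry is preserved under $S_u$, iteration converges in the Hausdorff metric to the Euclidean ball of volume $1$, and a uniform-integrability argument then yields the extremal property. The Steiner inequality is reduced, by Fubini along $u$, to a pointwise slice estimate: if $x_i=z_i+t_iu$ with $z_i\in u^\perp$, $I_i(z_i)=\{t\in\R:z_i+tu\in A_i\}$, and $I_i^*(z_i)=[-|I_i(z_i)|/2,|I_i(z_i)|/2]$, then
\[
  \int_{I_1(z_1)\times\cdots\times I_N(z_N)}\!\phi_z(t)\,dt\le\int_{I_1^*(z_1)\times\cdots\times I_N^*(z_N)}\!\phi_z(t)\,dt,\quad \phi_z(t)=\bigl|(\conv\{\pm(z_i+t_iu)\})^\circ\bigr|.
\]

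The main obstacle, and the heart of the proof, is this slice inequality. I would derive it from a multi-parameter Busemann-type inequality for the generalized symmetric shadow system $K(t_1,\ldots,t_N)=\conv\{\pm(z_i+t_iu)\}$, asserting that $t\mapsto 1/|K(t)^\circ|$ (or an appropriate negative power of the polar volume) is convex on $\R^N$; this extends the one-parameter Campi--Gronchi convexity for symmetric shadow systems. Combining this convexity with the $\pm$-invariance of $\Phi$ — exploited by pairing the base points $z$ with $-z$ and using $A_i=-A_i$ to match chords at $\pm z_i$ — should produce a function with the symmetry needed for a Brascamp--Lieb--Luttinger type one-dimensional rearrangement to deliver the slice estimate. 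The principal difficulties are (i) proving the multi-parameter Busemann convexity for the polar volume, and (ii) aligning it with the central-symmetry structure so that the coordinate-wise rearrangement applies simultaneously in all variables $t_1,\ldots,t_N$.
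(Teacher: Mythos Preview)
Your core strategy matches the paper's: the heart is indeed the multi-parameter Campi--Gronchi convexity for generalized shadow systems, combined with the Rogers--Brascamp--Lieb--Luttinger rearrangement inequality. Two points deserve comment.

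First, your worry about evenness in the slice inequality is misplaced, and the pairing device you propose is unnecessary. For fixed $z_1,\ldots,z_N\in u^\perp$, the function $\phi_z(t)=\bigl|(\conv\{\pm(z_i+t_iu)\})^\circ\bigr|$ is already even in $t$: the reflection across $u^\perp$ sends $z_i+t_iu$ to $z_i-t_iu$, so $K(-t)$ is the mirror image of $K(t)$ and their polar volumes coincide. No use of $A_i=-A_i$ is needed. Together with the convexity of $1/\phi_z$ (your point (i), which is the paper's Corollary~\ref{cor:GCCviaCG}), this makes $\phi_z$ even and quasi-concave on $\R^N$, and the slice inequality is then a direct instance of the one-dimensional Brascamp--Lieb--Luttinger inequality applied to the indicators $1_{I_i(z_i)}$. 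This is exactly the paper's mechanism (the Christ-type rearrangement, Theorem~\ref{thm:GCC->max}), and it applies just as well to arbitrary densities $f_i$ as to indicators---so your preliminary reductions to even densities and then to indicators do no real work.

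Second, your limiting step has a genuine technical gap: the extreme points of $\{f\ \text{even},\ 0\le f\le 1,\ \int f=1\}$ are indicators of centrally symmetric \emph{measurable} sets, not convex bodies, and iterated Steiner symmetrization of such sets need not converge in the Hausdorff metric. One can repair this by using $L^1$ convergence of the indicators (together with some integrability control on $\Phi$, which is delicate since $\Phi$ blows up on the degenerate set), but it is cleaner to bypass the extreme-point detour entirely. The paper instead first rearranges the densities to their symmetric decreasing rearrangements $f_i^\ast$ (radially decreasing, still bounded by $1$), and then passes from $f_i^\ast$ to $1_{D_n}$ by a separate elementary monotonicity argument (Lemma~\ref{fact}): since $B_1^N$ is unconditional, $\Phi$ is coordinate-wise decreasing along rays from the origin, and a one-variable bathtub inequality then shows that among radial densities bounded by $1$ the uniform density on $D_n$ maximizes $\int\Phi\prod g_i$.
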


The density of a measure on $\R^n$ will always refer to the density
with respect to Lebesgue measure on $\R^n$ (so it is implicit that the
measure is absolutely continuous with respect to Lebesgue
measure).

To see how the latter theorem generalizes the Blaschke-Santal\'o
inequality, let $K$ be a symmetric convex body and assume, without
loss of generality, that $\abs{K}=1$, where $\abs{\cdot}$ denotes
Lebesgue measure.  Let $X_1,\ldots, X_N, \ldots$ be a sequence of
independent random vectors uniformly distributed in $K$; by this we
mean that the law of $X_i$ is $\lambda_K$, Lebesgue measure restricted
to $K$, which has density $1_K$ (indeed bounded by one).  It is known
that $\conv\{\pm X_1,\ldots, \pm X_N\}$ converges almost surely to
$K$, in the Hausdorff metric, as $N\to +\infty$.  The latter also
holds in the special case when $K=D_n$. Consequently, we derive from
the theorem above that, in the limit, $\abs{K^{\circ}}\le
\abs{D_n^{\circ}}$ under the assumption that $\abs{K}=\abs{D_n}=1$,
which is the Blaschke-Santal\'o inequality; more detailed arguments as
well as other applications will be given in \S\ref{section:apps}.

Our work is part of the program initiated in~\cite{PaoPiv_probtake}
aimed at obtaining systematic random extensions (i.e. for random sets)
of several inequalities in convexity.  In the present paper, we treat
inequalities that are \emph{dual} to those considered
in~\cite{PaoPiv_probtake} (i.e.  inequalities for the polar bodies,
such as the Blaschke-Santal\'o inequality). Steiner symmetrization,
and more precisely \emph{shadow systems}, as in the work of Campi and
Gronchi~\cite{CG_product}, which was our main source of inspiration,
will play a central role, together with rearrangement inequalities.

In fact, we prove a general inequality, in the spirit of those
established in \cite{PaoPiv_probtake}. Our main result below extends
the statement of the previous theorem in several ways:
\begin{itemize}
\item[(i)] the result holds in distribution, not only in expectation;
\item [(iii)] we can replace Lebesgue measure by any rotationally
  invariant, radially decreasing, measure (for instance we can
  consider the volume of the intersection with a fixed Euclidean
  ball);
\item [(iii)] we can perform more general (convex) operations than the
  convex hull.
\end{itemize}

Before stating the result, we need to introduce a bit of
notation. Given $N$ vectors $x_1, \ldots, x_N$ in some $\R^n$ space,
we form the $n \times N$ matrix $[x_1\cdots x_N]$ that we view as an
operator from $\R^N$ to $\R^n$ or rather to
$\textrm{span}\{x_i\}\subset \R^n$; therefore if $C$ is a set in
$\R^N$, we denote
  \begin{equation*}
  [x_1\cdots x_N]C = \Bigl\{\sum_{i=1}^N c_i \, x_i: c=(c_i)_{i\le
    N}\in C \Bigr\} \subset \textrm{span}\{x_i\}\subset \R^n.
\end{equation*} 
Accordingly, if $ X_1,\ldots,X_N$ are random vectors in $\R^n$, the
random matrix $[X_1\cdots X_N]$ is a random linear operator from
$\R^N$ to $\R^n$ and for $C\subset \R^N$, $[X_1 \cdots X_N]C$ is a
random set in $\R^n$.
   
A convex body $C$ in $\R^N$ is \emph{unconditional} if it is invariant
under the coordinate reflections, i.e. $(c_1, \ldots, c_N)\in
C\Rightarrow (\pm c_1, \ldots, \pm c_N) \in C$. Typical examples
include the unit ball $B_p^N=\{c\in \R^N \; : \ \sum |c_i|^p \le 1\}$
of the $\ell_p^N$ space, for $p\ge 1$.
   
Finally, let us denote by $\mathcal{P}_n$ the class of all Borel
probability measures on $\R^n$ that have an $L^1$-density with respect
to Lebesgue measure bounded by $1$, i.e. with some abuse of notation,
$$\mathcal P_n = \Big\{ \mu
%\textrm{Borel probability measure on $\R^n$}
\; : \ d\mu(x)= f(x)\, dx \ \textrm{ with } f\ge 0, \int\! f = 1
\textrm{ and } \norm{f}_\infty \le 1 \Big\}.$$ where
$\norm{\cdot}_{\infty}$ is the essential supremum.  This set includes
Lebesgue measure restricted to sets of volume one, and actually after
proper scaling (dilation), any Borel probability measure that is
absolutely continuous with respect to Lebesgue measure and that has a
bounded density.

Our main result is as follows.

\begin{theorem}
  \label{thm:main}
Let $C$ be an unconditional convex body in $\R^N$ and $\nu$ be a
radial measure on $\R^n$ of the form $d\nu(x)= \rho(|x|)\, dx $ with
$\rho:[0,+\infty)\to [0 +\infty)$ decreasing.  If $X_1, \ldots, X_N$
    are $N$ independent random vectors in $\R^n$ whose laws are in
    $\mathcal P_n$, then
 \begin{equation}\label{mainexp}
    \EE\big[\nu\big(([X_1\cdots X_N]C)^{\circ}\big) \big] \leq
   \EE\big[\nu\big(([Z_1\cdots Z_N]C)^{\circ}\big) \big] .
  \end{equation}
   where $Z_1, \ldots,Z_N$ are independent random vectors uniformly
   distributed in the Euclidean ball $D_n\subset \R^n$ of volume one.

Moreover, if $\rho^{-1/(n+1)}:[0,+\infty)\to [0, +\infty]$ is convex,
  then, with the same notation, we also have that
 \begin{equation}\label{maindist}
    \forall t> 0, \quad \PP \big[\nu\big(([X_1\cdots
        X_N]C)^{\circ}\big)\ge t\big] \leq \PP
    \big[\nu\big(([Z_1\cdots Z_N]C)^{\circ}\big)\ge t\big].
  \end{equation}
\end{theorem}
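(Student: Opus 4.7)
\textbf{Step 1 (densities to unit-volume sets).} Both sides of \eqref{mainexp} and \eqref{maindist} can be written as
$$\int_{(\R^n)^N} G(x_1,\ldots,x_N)\, f_1(x_1)\cdots f_N(x_N)\, dx,$$
multilinear in the densities $f_i$ of $X_i$ (with $G$ being $\nu((\cdots)^\circ)$ or $\mathbf 1_{\{\nu((\cdots)^\circ)\ge t\}}$). The constraint set $\mathcal F:=\{f\ge 0:\int\! f=1,\|f\|_\infty\le 1\}$ has extreme points precisely the indicators $\mathbf 1_A$ with $|A|=1$ (bathtub principle). Iteratively maximizing in each $f_i$, we may assume each $X_i$ is uniform on some $A_i\subset\R^n$ with $|A_i|=1$.

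\textbf{Step 2 (symmetrization via shadow systems).} It is enough to prove that replacing any $A_i$ by its Steiner symmetral $S_u A_i$ ($u\in S^{n-1}$) does not decrease the resulting integral, since iterating in a dense sequence of directions converges each $A_i$ to $D_n$. The symmetrization step is a shadow-system argument: for fixed $x_j$ ($j\ne i$), as $X_i$ traces the shadow system $\{A_i^s\}$ from $A_i$ through $S_u A_i$ to $R_u A_i$, the set $[x_1\cdots X_i\cdots x_N]\,C$ is itself a shadow system of convex bodies in $\R^n$ along $u$ (each $c\in C$ contributes a point shifting linearly along $u$ at speed $c_i$). The key analytic ingredient is a Busemann-type convexity: \emph{for any shadow system $\{K_s\}$ of convex bodies in $\R^n$ containing the origin and any $\nu$ with $\rho^{-1/(n+1)}$ convex, $s\mapsto 1/\nu(K_s^\circ)$ is convex.} Granting this, $\nu(K_s^\circ)$ is quasi-concave in $s$, and unconditionality of $C$ makes this quasi-concave function even in the shadow parameter, so its superlevel sets are intervals symmetric about the centre. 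A bathtub comparison then yields Steiner monotonicity of both $\nu((\cdots)^\circ)$ and its superlevel indicators, giving \eqref{maindist} under the convexity hypothesis on $\rho$.

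\textbf{Step 3 (from \eqref{maindist} to \eqref{mainexp} for general $\rho$).} Integration of \eqref{maindist} in $t$ gives \eqref{mainexp} whenever $\rho^{-1/(n+1)}$ is convex. For a general radially decreasing $\rho$, layer cake gives $\nu(A)=\int_0^\infty|A\cap B(0,r_s)|\,ds$ with $r_s:=\sup\{r:\rho(r)>s\}$; applying the previous case to each $B(0,r_s)$ (whose indicator density trivially satisfies the convexity hypothesis as an extended-real function) and integrating in $s$ gives \eqref{mainexp} in full generality.

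\textbf{Main obstacle.} The crux is the key lemma above --- a measure-weighted shadow-system convexity for polar volumes, extending Meyer--Pajor / Campi--Gronchi from Lebesgue measure to $\nu$. The exponent $n+1$ is natural: a shadow system in $\R^n$ can be realized as the family of projections of a fixed body in $\R^{n+1}$, so the natural condition on $\nu$ is a Borell / Berwald-type $(n+1)$-concavity, precisely $\rho^{-1/(n+1)}$ convex. The main technical points are the polar analysis of a shadow system and verifying that the required reflection symmetry (needed for the bathtub step) is inherited from the unconditionality of $C\subset\R^N$.
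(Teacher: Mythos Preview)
Your identification of the key analytic lemma---convexity of $s\mapsto 1/\nu(K_s^\circ)$ along shadow systems when $\rho^{-1/(n+1)}$ is convex---is exactly right and matches the paper's Campi--Gronchi-for-convex-measures theorem; your Step~3 layer-cake reduction for general decreasing $\rho$ also coincides with the paper's.

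The gap is in Step~2. You propose to Steiner-symmetrize one $A_i$ at a time, holding the remaining $x_j$ fixed. For the chord/bathtub comparison this requires the one-variable function
\[
t\ \longmapsto\ G(x_1,\ldots,y_i+tu,\ldots,x_N)
\]
to be \emph{even} in $t$, not just quasi-concave. But unconditionality of $C$ gives only $G(\ldots,-x_i,\ldots)=G(\ldots,x_i,\ldots)$, i.e.\ invariance under $x_i\mapsto -x_i$, not under the hyperplane reflection $x_i\mapsto \sigma_u x_i = y_i-tu$. With the frozen $x_j$ unreflected, evenness in $t$ simply fails. (Separately, the sets $A_i$ produced by your Step~1 need not be convex, so speaking of a shadow system $\{A_i^s\}$ for $A_i$ itself is already problematic.)

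What \emph{is} true, and what the paper exploits, is joint evenness and quasi-concavity of
\[
(t_1,\ldots,t_N)\ \longmapsto\ G(y_1+t_1u,\ldots,y_N+t_Nu)
\]
on all of $\R^N$: replacing $t$ by $-t$ is the global reflection $\sigma_u$ of the whole configuration, under which both the body $[y_1+t_1u\,\cdots\,y_N+t_Nu]C$ (origin-symmetry of $C$ suffices here) and $\nu$ (being radial) are invariant. The paper feeds this $N$-variable condition into the Rogers--Brascamp--Lieb--Luttinger/Christ rearrangement inequality, which symmetrizes all $N$ densities simultaneously in each direction; one-at-a-time symmetrization cannot substitute for this, because at intermediate steps the still-unsymmetrized $A_j$ destroy the needed reflection symmetry. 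Note also that in the paper, unconditionality of $C$ is not used for the symmetrization step at all; it enters only afterwards, to make $G$ coordinatewise decreasing (namely $G(s_1x_1,\ldots,s_Nx_N)\ge G(t_1x_1,\ldots,t_Nx_N)$ for $0\le s_i\le t_i$), which is the hypothesis of the bathtub lemma that replaces the radial densities $f_i^*$ by $1_{D_n}$. Your Step~1 extreme-point reduction is an alternative ordering of that bathtub stage, but it does not repair the symmetrization gap.
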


Throughout the text, we will use the terms increasing and decreasing
in the non-strict sense.

In particular, note that both results~\eqref{mainexp}
and~\eqref{maindist} hold in the case where $\nu=\abs{\cdot}$ is
Lebesgue measure, and more generally when $\nu(\cdot)=|\cdot \, \cap\,
rB_2^n|$ is the restriction of Lebesgue measure to any Euclidean ball
$\{|x|\le r\}$, $r\in (0,+\infty]$. Of course,~\eqref{mainexp}
formally follows from~\eqref{maindist}, but we stated it first because
we can prove it for a more general class of measures $\nu$. We do not
claim, though, that the further convexity assumption is necessary for
the inequality to hold in distribution; but it is needed in our proof.

To recover Theorem~\ref{thm:random-santalo} from the previous theorem,
simply notice that if $C = B_1^N$, the unit ball in $\ell_1^N$, then
\begin{equation*}
  [X_1\cdots X_N] B_1^N =\conv\{\pm X_1\cdots \pm X_N\}.
\end{equation*}
More generally, using $C=B_q^N$, for some $1\le q\le +\infty$, we also
recover in \S 5 below an inequality which implies the polar
$L^p$ centroid body inequality of Lutwak-Zhang \cite{LZ_BS}.

In \S 2 we recall some basic notation and facts from convex
geometry, in particular Borell's terminology and results concerning
dimensional forms of Pr\'ekopa's theorem. The content of \S 3
might be of independent interest. There, we first recall and extend
Busemann type results, for which we explain how to derive them from
the aforementioned dimensional inequalities by a simple change of
variable. Then we apply these results to the measure of the polar
sets along generalized shadow systems, in the spirit of the work of
Campi and Gronchi~\cite{CG_product}. In \S 4, we start by
recalling the rearrangement inequality of
Rogers-Brascamp-Lieb-Luttinger type, in the form put forward by
Christ~\cite{Christ}. With these ingredients in hand, we procede, at
the end of \S 4, to give the proof of our main result,
Theorem~\ref{thm:main}. Finally, \S 5 presents some further
applications of our result to (non-random) geometric inequalities.

\section{Preliminaries}
We work in Euclidean spaces $\R^n, \R^N\subset
\R^{n+N}=\R^n\oplus\R^N$ with the canonical embeddings and we assume
that $N,n\gr 1$. The usual inner-product is denoted $\langle \cdot,
\cdot \rangle$ with associated Euclidean norm $\abs{\cdot}$ and
the standard unit vector basis is $e_1,\ldots,e_{n+N}$.  We also use
$\abs{\cdot}$ for the $n$-dimensional Lebesgue measure and the
absolute value of a scalar, the use of which will be clear from the
context. The Euclidean ball of radius one is denoted $B_2^n$ and its
volume $\omega_n:=\abs{B_2^n}$.  We reserve $D_n$ for the Euclidean
ball of volume one, i.e., $D_n=\omega_n^{-1/n} B_2^n$; Lebesgue
measure restricted to $D_n$ is $\lambda_{D_n}$; it belongs to
$\mathcal P_n$.  The unit sphere is $S^{n-1}\subset \R^n$ and is
equipped with the Haar probability measure $\sigma$, which is the
usual rotationally invariant measure on $S^{n-1}$, normalized to be a
probability measure. Recall that $B_p^n$ denotes the unit-ball of
$\ell_p^n$, $1\ls p\ls \infty$.

The support function of a convex set $K\subset \R^n$ is given by
\begin{equation*}
  h_K(x) =\sup\{\langle y, x\rangle\; : \ y\in K\} \quad (x\in \R^n).
\end{equation*}
If $K$ and $L$ are convex sets in $\R^n$ then
\begin{equation*}
  h_K(x)+h_L(x) = h_{K+L}(x) \quad (x\in \R^n),
\end{equation*} where $K+L$ is the Minkowski sum of $K$ and $L$:
\begin{equation*}
  K+L =\{ k + l\; : \  k\in K, l\in L\}.
\end{equation*}
If $K\subset \R^n$ is a convex set, the polar $K^{\circ}$ of $K$
is defined by $K^{\circ}=\{y\in \R^n\; : \ h_K(y)\ls 1\}$.  A \emph{convex body}
$K\subset \R^n$ is a compact, convex set with non-empty interior. 
A set $K\subset\R^n$ is a \emph{star-body} with respect to the origin if it is compact, with the
origin in its interior and for every $x\in K$ and $\lambda\in[0,1]$ we have $\lambda x\in K$. Then its \emph{gauge function} is denoted by
$\|\cdot\|_K$ and is defined as $\|x\|_K=\inf\{t>0 : x\in tK\}$. 
We
say that $K$ is (origin) symmetric if $K=-K$. We already gave the definition of \emph{unconditional convex bodies} which are an important subfamily of symmetric convex bodies.
%$K\subset\R^n$ is unconditional (with respect to the standard basis
%$(e_1,\ldots,e_n)$), if
%\begin{equation}
%  \label{eqn:1symmetric}
%  (\varepsilon_1 x_1,\ldots,\varepsilon_n x_n)\in K
%\end{equation}
%whenever $x=(x_1,\ldots,x_n)\in K$, $\varepsilon_i\in \{-1,1\}$ for each
%$i=1,\ldots,n$.  

For $K,L$ being convex sets in $\R^n$, we let $\delta^{H}(K,L)$ denote the Hausdorff
distance between them, i.e.,
\begin{equation*}
  \delta^{H}(K,L) = \inf\{\eps>0\; : \ K\subset L+\eps B_2^n, L\subset
  K+\eps B_2^n\};
\end{equation*}
or equivalently, in terms of support functions, 
\begin{equation*}
  \delta^{H}(K,L) = \sup_{\theta \in S^{n-1}}\abs{h_K(\theta)-h_L(\theta)}.
\end{equation*}

Let $\mathcal{K}^n_{\circ}$ denote the class of all convex bodies that
contain the origin in their interior. We will make use of the following basic facts (see, e.g.
\cite{Moszynska, Schneider_book}).

\begin{lemma}
  \label{lemma:Hausdorff}
  Let $K,L,K_1,K_2,\ldots\in \mathcal{K}^n_{\circ}$ be such that
  $K_N{\overset{\delta_H}{\longrightarrow}}K$ as $N\rightarrow
  \infty$.  Then
  \begin{itemize}
    \item[(i)]$K_N^\circ\ {\overset{\delta_H}{\longrightarrow}}\ K^\circ$
      as $N\rightarrow \infty$
    \item[(ii)]$K_N\cap L\ {\overset{\delta_H}{\longrightarrow}}\ K\cap L$
      as $N\rightarrow \infty$
    \item[(iii)]$K_N+L\ {\overset{\delta_H}{\longrightarrow}}\ K+L$ as $N\rightarrow
      \infty$
  \end{itemize}
\end{lemma}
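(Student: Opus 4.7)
The plan is to reduce each of (i), (ii), (iii) to a statement about uniform convergence of either the support functions $h_{K_N}$ or the gauge functions $\|\cdot\|_{K_N}$ on the unit sphere $S^{n-1}$. Recall that for bodies in $\mathcal{K}^n_\circ$ one has $\delta^H(K, L) = \sup_{\theta \in S^{n-1}} |h_K(\theta) - h_L(\theta)|$, and when there exist $0 < r \leq R < \infty$ with $r B_2^n \subset K_j \subset R B_2^n$ for all bodies in play, Hausdorff convergence is also equivalent to uniform convergence of gauges on $S^{n-1}$, because on the relevant annular region $\|\cdot\|_K$ and $h_K$ are uniformly bi-Lipschitz in $K$. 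As a preliminary step I would show that since $K \in \mathcal{K}^n_\circ$ and $K_N \to K$, there exist such $r, R$ and some $N_0$ with $r B_2^n \subset K_N \subset R B_2^n$ for all $N \geq N_0$; this uniform squeeze is what enables free passage between support and gauge functions throughout.

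Part (iii) is then immediate from the additivity of support functions: $h_{K_N + L} - h_{K+L} = h_{K_N} - h_K$, whence $\delta^H(K_N + L, K + L) = \delta^H(K_N, K) \to 0$.

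For (ii), the identity $\|x\|_{K_N \cap L} = \max(\|x\|_{K_N}, \|x\|_L)$ reduces the problem to the uniform convergence $\|\cdot\|_{K_N} \to \|\cdot\|_K$ on $S^{n-1}$: since $|\max(a, c) - \max(b, c)| \leq |a - b|$, the gauges of the intersections converge uniformly on $S^{n-1}$, and the preliminary squeeze ensures the bodies $K_N \cap L$ and $K \cap L$ remain uniformly in $\mathcal{K}^n_\circ$, so Hausdorff convergence follows.

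For (i), I would exploit the polarity identity $\|y\|_{K^\circ} = h_K(y)$: uniform convergence of $h_{K_N}$ to $h_K$ on $S^{n-1}$ immediately gives uniform convergence $\|\cdot\|_{K_N^\circ} \to \|\cdot\|_{K^\circ}$. The inclusions $r B_2^n \subset K_N \subset R B_2^n$ dualize to $R^{-1} B_2^n \subset K_N^\circ \subset r^{-1} B_2^n$, so the polars remain uniformly in $\mathcal{K}^n_\circ$ and gauge convergence translates back to Hausdorff convergence. The main obstacle is really only the preliminary scaffolding: establishing the equivalences between Hausdorff distance and uniform convergence of support and gauge functions, and extracting the uniform double containment of the $K_N$ from the assumed Hausdorff convergence to $K \in \mathcal{K}^n_\circ$. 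Once that infrastructure is in place, each of (i)--(iii) is a one-line manipulation of the appropriate functional.
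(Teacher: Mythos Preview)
Your proposal is correct and follows a standard route. Note, however, that the paper does not actually supply a proof of this lemma: it is stated as a basic fact with references to the textbooks \cite{Moszynska, Schneider_book}. So there is no ``paper's own proof'' to compare against. Your argument---reducing (iii) to additivity of support functions, (ii) to the identity $\|x\|_{K\cap L}=\max(\|x\|_K,\|x\|_L)$ together with the Lipschitz property of $\max$, and (i) to the duality $\|y\|_{K^\circ}=h_K(y)$, all underpinned by a uniform squeeze $rB_2^n\subset K_N\subset RB_2^n$ extracted from $K_N\to K\in\mathcal{K}^n_\circ$---is exactly the kind of elementary verification one finds in those references.
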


If $A\subset \R^n$ is a Borel set with finite volume, the symmetric
rearrangement $A^{\ast }$ of $A$ is the (open) Euclidean ball centered
at the origin whose volume is equal to that of $A$.  The symmetric
decreasing rearrangement of $1_A$ is defined by
$(1_A)^{\ast}:=1_{A^{\ast }}$. If $f:{\R}^n\rightarrow {\R}^+$ is
an integrable function, we define its symmetric decreasing
rearrangement $f^{\ast}$ by
\begin{equation*}
  f^{\ast }(x)=\int_0^{\infty }1^{\ast }_{\{ f> t\}}(x)dt
  =\int_0^{\infty }1_{\{ f>t\}^{\ast }}(x)dt.
\end{equation*}
The latter should be compared with the ``layer-cake representation''
of $f$:
\begin{equation}
  \label{eqn:layer_cake}
  f(x)=\int_0^{\infty }1_{\{ f> t\}}(x)dt;
\end{equation}
see \cite[Theorem 1.13]{LL_book}.  The function $f^{\ast}$ is
radially-symmetric, decreasing and equimeasurable with $f$, i.e.,
$\{f>\alpha\}$ and $\{f^*>\alpha\}$ have the same volume for each
$\alpha > 0$.  By equimeasurability one has $\norm{f}_p=\norm{f^*}_p$
for each $1\ls p\ls \infty$, where $\norm{\cdot}_p$ denotes the
$L_p(\R^n)$-norm.  If $\mu\in \mathcal{P}_n$ has density $f_\mu$, we
let $\mu^*$ denote the measure in $\mathcal{P}_n$ with density
$f_{\mu}^*$.  For completeness, recall that for a nonnegative function
$f$ in $\R^n$, the rearrangement $f^\ast$ can be reached by a sequence
of \emph{Steiner symmetrizations} $f^*(\cdot|\theta)$, which
correspond to symmetrization in dimension one in the direction
$\theta\in S^{n-1}$; namely $f^*(\cdot|\theta)$ is obtained by
rearranging $f$ (in dimension $1$) along every line parallel to
$\theta$.  The function $f^*(\cdot|\theta)$ is symmetric with respect
to $\theta^\perp$ (by this we mean invariant under the hyperplane
reflection $\sigma_\theta(x) := x - 2 \langle x, \theta \rangle
\theta$).  We refer the reader to the book \cite{LL_book} for further
background material on rearrangements of functions.

\medskip

Let us now recall the results and terminology of
Borell~\cite{borell74, borell75}.
\begin{definition}[Borell's terminology]
Let $s \in[-\infty, 1]$. A Borel measure $\mu$ on $\R^n$ is called \emph{$s$-concave}  if
\[
\mu \left( (1-\la) A + \la B \right) \ge \left( (1-\la) \mu(A)^s + \la \mu(B)^s\right)^{1/s}
\] 
for all compact sets $A, B \subset \R^n$ such that $\mu(A) \mu(B) > 0$.
For $s=0$, one says that $\mu$ is $\log$-concave and the inequality is read as 
$
\mu \left( (1-\la) A + \la B \right) 
\ge 
 \mu(A)^{1-\la} \mu(B)^{\la}.
$
For $s=-\infty$,  the measure is said to be \emph{convex} and the inequality is replaced by 
\[
\mu \left( (1-\la) A + \la B \right) 
\ge 
\min\left(\mu(A),\mu(B)
\right).
\] 
\end{definition}

Notice that the class of $s$-concave measures on $\R^n$ is decreasing in $s$, so that convex measures form the largest one.

We have an analogous notion of $\gamma$-concavity for functions. Namely, by definition, a nonnegative, non-identically zero, function $\psi$  is  \emph{$\gamma$-concave} if: $(i)$ for $\gamma>0$, $\psi^\gamma$ is concave on $\{\psi>0\}$; $(ii)$ for $\gamma=0$, $\log\psi$ is concave on $\{\psi>0\}$; $(iii)$ for $\gamma<0$,  $\psi^\gamma$ is convex on $\{\psi>0\}$.

 In \cite{borell74,borell75}, Borell established the following
 complete characterization of convex measures.  An $s$-concave measure
 $\mu$ is always supported on some convex subset of an affine subspace
 $E$ where it has a density. Moreover, if $\mu$ is a measure on $\R^n$
 absolutely continuous with respect to Lebesgue measure with density
 $\psi$, then it is $s$-concave if and only if its density $\psi$ is
 $\gamma$-concave where $\gamma=s/(1-ns)$. In particular, a measure
 $\mu$ with density $\psi$ on $\R^n$ is a convex measure if and only
 if $\psi$ is $-1/n$-concave. A crucial tool in our arguments will be
 the dimensional form of Pr\'ekopa's theorem obtained
 in~\cite{borell75, BL} as a corollary of the functional versions of
 the Brunn-Minkowski inequality, known as the
 \emph{Borell-Brascamp-Lieb inequalities}. It can also be seen as a
 direct consequence of the aforementioned characterization of Borell
 and the fact that the marginals of an $s$-concave measure are always
 $s$-concave. Thus the following theorem could be called by many names
 such as "Borell-Brascamp-Lieb restricted to convex functions" or "the
 dimensional Pr\'{e}kopa's theorem" or "the functional Brunn's
 principle". In this paper, we shall use the last two names.

\begin{theorem}{\rm(Functional Brunn's principle)}\label{extensionofPrekopa}
Let $\varphi: \R^{n+1} \to (0,\infty]$ be a positive convex function
  and let $\alpha>0$. Then the function $\Phi$ defined on $\R$ by
$$ \Phi(t)= \bigg{(}\int_{\R^n}\varphi(t,x)^{-n-\alpha}\, dx\bigg{)}^{-\frac{1}{\alpha}}$$
is convex.
\end{theorem}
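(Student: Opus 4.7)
The strategy is to view the integrand $\psi(t,x):=\varphi(t,x)^{-n-\alpha}$ as a $\gamma$-concave function on $\R^{n+1}$ with $\gamma=-\frac{1}{n+\alpha}$, and to propagate this concavity through integration in $x$ via the Borell-Brascamp-Lieb (BBL) inequality, which is the functional form of the dimensional Brunn-Minkowski principle. Indeed, $\varphi$ being positive and convex means that $\psi^{\gamma}=\varphi$ is convex, which is exactly the definition of $\gamma$-concavity of $\psi$ for $\gamma<0$ recalled just before the theorem; moreover $\gamma=-1/(n+\alpha)>-1/n$ since $\alpha>0$, so the hypothesis needed to apply BBL on the $n$-dimensional sections $x\in\R^n$ is satisfied.

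Concretely, I would fix $s,t\in\R$ and $\la\in(0,1)$, set $u=(1-\la)s+\la t$, and apply the BBL inequality to the three $n$-dimensional sections $x\mapsto\psi(s,x),\ \psi(t,x),\ \psi(u,x)$. The joint $\gamma$-concavity of $\psi$ on $\R^{n+1}$ furnishes the pointwise hypothesis
$$\psi(u,(1-\la)y+\la z)\ge \bigl((1-\la)\,\psi(s,y)^{\gamma}+\la\,\psi(t,z)^{\gamma}\bigr)^{1/\gamma}$$
for all $y,z\in\R^n$ with $\psi(s,y),\psi(t,z)>0$. BBL then yields $\gamma'$-concavity of the marginal
$$G(t):=\int_{\R^n}\varphi(t,x)^{-n-\alpha}\,dx$$
in the single variable $t$, with updated exponent
$$\gamma'=\frac{\gamma}{1+n\gamma}=\frac{-1/(n+\alpha)}{\alpha/(n+\alpha)}=-\frac{1}{\alpha}.$$
By the definition of $\gamma'$-concavity for $\gamma'<0$, this is precisely the statement that $G^{-1/\alpha}$ is convex on $\R$ (extended by $+\infty$ on $\{G=0\}$); but $\Phi=G^{-1/\alpha}$, so this is the desired convexity of $\Phi$.

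No genuine obstacle arises: the proof reduces to a direct invocation of BBL plus careful bookkeeping of exponents under marginalization, together with a short verification that the positivity/integrability hypotheses hold on each slice and that the values $\varphi=+\infty$ (equivalently $\psi=0$) are handled consistently with the convention that $\Phi=+\infty$ wherever $G$ vanishes. Alternatively, as suggested in the paragraph preceding the statement, one may argue at the level of measures: $\psi\cdot dx$ is $s$-concave on $\R^{n+1}$ for the $s$ dictated by Borell's classification, marginals of $s$-concave measures are $s$-concave by Borell, and translating back to densities on $\R$ yields the same exponent $-1/\alpha$ for $G$ and hence convexity of $\Phi$.
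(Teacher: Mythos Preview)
Your proposal is correct and matches exactly what the paper indicates. The paper does not write out a proof of this theorem; it states it as a known result from \cite{borell75, BL} and remarks that it follows either as a corollary of the Borell--Brascamp--Lieb inequalities or, equivalently, from Borell's characterization of $s$-concave measures together with the stability of $s$-concavity under marginals---precisely the two routes you describe, with the BBL route worked out in detail and the measure-theoretic route mentioned as an alternative.
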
 

We shall also sometimes combine it with the following easy and
well-known lemma.
% that we prove nevertheless below for completeness.

\begin{lemma}{\label{homog}}
Let $f:\R^n\to\R_+$ be a convex function. Then the function $\varphi:
\R^n\times(0,+\infty) \to \R_+$ defined for
$(z,s)\in\R^n\times(0,+\infty)$ by $\varphi(z,s)=s f(z/s)$ is convex.
\end{lemma}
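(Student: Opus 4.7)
The plan is to verify the defining inequality of convexity directly. Pick two points $(z_0, s_0), (z_1, s_1)$ in $\R^n \times (0, +\infty)$ and $\la \in [0,1]$, and set $s_\la = (1-\la) s_0 + \la s_1$ and $z_\la = (1-\la) z_0 + \la z_1$. Since $s_0, s_1 > 0$, we have $s_\la > 0$, so $\varphi(z_\la, s_\la)$ is well-defined.

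The key observation is that $z_\la/s_\la$ is itself a convex combination of $z_0/s_0$ and $z_1/s_1$: indeed
\[
\frac{z_\la}{s_\la} = \frac{(1-\la) s_0}{s_\la} \cdot \frac{z_0}{s_0} + \frac{\la s_1}{s_\la} \cdot \frac{z_1}{s_1},
\]
where the two coefficients $\frac{(1-\la)s_0}{s_\la}$ and $\frac{\la s_1}{s_\la}$ are nonnegative and sum to one. Applying convexity of $f$ gives
\[
f\!\left(\frac{z_\la}{s_\la}\right) \ls \frac{(1-\la) s_0}{s_\la} f\!\left(\frac{z_0}{s_0}\right) + \frac{\la s_1}{s_\la} f\!\left(\frac{z_1}{s_1}\right).
\]
Multiplying both sides by $s_\la > 0$ yields exactly
\[
\varphi(z_\la, s_\la) \ls (1-\la)\, \varphi(z_0, s_0) + \la\, \varphi(z_1, s_1),
\]
which is the desired convexity.

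There is no real obstacle here; the only point to keep in mind is that we must stay in the open half-space $s>0$ so that the division $z/s$ makes sense and the weights are legitimate. The construction $(z,s) \mapsto s f(z/s)$ is the standard perspective (or recession-type) transform, and the argument above is just the textbook verification.
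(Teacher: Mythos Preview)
Your proof is correct and is essentially the same as the paper's: both rewrite $z_\la/s_\la$ as the convex combination of $z_0/s_0$ and $z_1/s_1$ with weights $(1-\la)s_0/s_\la$ and $\la s_1/s_\la$, apply the convexity of $f$, and multiply through by $s_\la$. The paper additionally remarks on the positive homogeneity of $\varphi$, but the core computation is identical.
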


\begin{proof}
Notice that $\varphi$ is positively homogeneous in the sense that
$\varphi(\lambda z, \lambda s)=\lambda\varphi(z,s)$ for every
$z\in\R^n$ and $s,\lambda>0$. For every $s_1,s_2>0$,
$\la_1,\la_2\ge0$, with $\la_1+\la_2=1$ and $z_1,z_2\in\R^{n}$, with
$f(z_1/s_1)>0$ and $f(z_2/s_2)>0$ one has
\begin{eqnarray*}
\varphi(\la_1z_1+\la_2z_2,\la_1s_1+\la_2s_2)&=&(\la_1s_1+\la_2s_2)f\left(\frac{\la_1s_1\frac{z_1}{s_1}+\la_2s_2\frac{z_2}{s_2}}{\la_1s_1+\la_2s_2}\right)\\ &\le&\la_1s_1f\left(\frac{z_1}{s_1}\right)+\la_2s_2f\left(\frac{z_2}{s_2}\right)\\ &=&\la_1\varphi(z_1,s_1)+\la_2\varphi(z_2,s_2).
\end{eqnarray*}
\end{proof}

\section{Busemann's theorem and shadow systems  for convex measures}

In this section, we first recall a generalization of Busemann's
inequality, that we derive from the functional Brunn's Principle
(Theorem~\ref{extensionofPrekopa}) by an elementary argument; we then
use it to deduce an extension of Busemann's theorem to convex
measures. In the second part of this section, we combine these
inequalities to extend a theorem of Campi and
Gronchi~\cite{CG_product} to generalized shadow systems and convex
measures.

\subsection{Busemann's theorem for convex measures}

The following theorem is Bobkov's generalization \cite{bobkov} of a
theorem due to Ball \cite{ball} in the log-concave case. The short
proof that we give below shows that it follows from the functional
Brunn's principle by a change of variable (which simplifies the
argument given by Bobkov \cite{bobkov}). This theorem enables one to
attach to a function with some concavity properties a family of convex
bodies (the unit balls of the gauges given by the theorem), sometimes
called Ball's bodies. It is a key technique due to Ball to extend
results from convex sets to log-concave functions.

\begin{theorem}[\cite{ball,bobkov}]
\label{ball-bobkov}
Let $p>0$ and $f:\R^n\to\R_+$ be $\gamma$-concave for some $\gamma\ge
-1/(p+1)$. Then the function $F:\R^n\to\R_+$ defined by
\[F(x)=\left(\int_0^{+\infty}f(rx)\, r^{p-1}dr\right)^{-\frac{1}{p}}\]
is a gauge on $\R^n$.
\end{theorem}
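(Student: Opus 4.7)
The two things to check are that $F$ is positively $1$-homogeneous and subadditive (equivalently, convex together with homogeneity); then $F$ will be the Minkowski gauge of the star-body $\{F\le 1\}$. Homogeneity is a quick direct computation: for $\lambda>0$, the substitution $u=\lambda r$ in
\[
F(\lambda x)^{-p}=\int_0^{+\infty}f(r\lambda x)\,r^{p-1}\,dr
\]
gives $F(\lambda x)^{-p}=\lambda^{-p}F(x)^{-p}$, whence $F(\lambda x)=\lambda F(x)$.

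For convexity, the plan is to rewrite $F^{-p}$ as an integral whose integrand is a negative power of a convex function, so that Theorem~\ref{extensionofPrekopa} (the functional Brunn principle) applies. First, since $\gamma$-concavity becomes weaker as $\gamma$ decreases, we may assume without loss of generality that $\gamma=-1/(p+1)$, so that $g:=f^{-1/(p+1)}:\R^n\to(0,+\infty]$ is convex (with the convention $g=+\infty$ where $f=0$). By Lemma~\ref{homog}, the function
\[
\varphi(x,s):=s\,g(x/s)\qquad(x\in\R^n,\ s>0)
\]
is convex on $\R^n\times(0,+\infty)$; extend it by $+\infty$ for $s\le 0$ to obtain a convex function on $\R^{n+1}$. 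A direct computation gives
\[
\varphi(x,s)^{-(p+1)}=s^{-(p+1)}f(x/s)\qquad(s>0).
\]

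Now substitute $s=1/r$ (so $dr=-ds/s^2$ and $r^{p-1}=s^{1-p}$) in the definition of $F$:
\[
F(x)^{-p}=\int_0^{+\infty}f(rx)\,r^{p-1}\,dr
=\int_0^{+\infty}f(x/s)\,s^{-(p+1)}\,ds
=\int_0^{+\infty}\varphi(x,s)^{-(p+1)}\,ds.
\]
Applying Theorem~\ref{extensionofPrekopa} on every affine line in the $x$-variable, with the one-dimensional integration performed in the $s$-variable and with $\alpha=p$ (so that the inner exponent is $-1-\alpha=-(p+1)$ and the outer one is $-1/\alpha=-1/p$), we conclude that
\[
x\mapsto\left(\int_0^{+\infty}\varphi(x,s)^{-(p+1)}\,ds\right)^{-1/p}=F(x)
\]
is convex on $\R^n$. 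Together with homogeneity, this yields subadditivity, so $F$ is a gauge.

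The main obstacle is choosing the right convex function $\varphi$ and tracking the exponents through the $s=1/r$ substitution so that Theorem~\ref{extensionofPrekopa} can be invoked with matching parameters; once the identity $F(x)^{-p}=\int_0^{+\infty}\varphi(x,s)^{-(p+1)}\,ds$ is in hand, the rest is automatic. The reduction to $\gamma=-1/(p+1)$ is merely conceptual and causes no trouble, and the passage from the one-dimensional statement of Theorem~\ref{extensionofPrekopa} to convexity in $\R^n$ is handled by restricting to lines.
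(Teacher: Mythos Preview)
Your proposal is correct and follows essentially the same route as the paper: introduce the convex function $g=f^{-1/(p+1)}$, perform the substitution $s=1/r$ to rewrite $F(x)^{-p}=\int_0^{+\infty}(s\,g(x/s))^{-(p+1)}\,ds$, invoke Lemma~\ref{homog} for the convexity of $(x,s)\mapsto s\,g(x/s)$, and then apply Theorem~\ref{extensionofPrekopa} with $\alpha=p$. The only differences are cosmetic: you spell out the homogeneity check and the restriction-to-lines step, and you make explicit the harmless reduction to $\gamma=-1/(p+1)$.
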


\begin{proof}
Let us denote by $\varphi$ the convex function such that
$\varphi=f^{-1/(p+1)}$. Using the change of variable $r=1/s$, we get
\[
\int_0^{+\infty}f(rx)r^{p-1}dr=\int_0^{+\infty}\left(s\varphi(x/s)\right)^{-(p+1)}ds
\]
From Lemma \ref{homog}, we know that $(x,s)\to s\varphi(x/s)$ is
convex on $(0,+\infty)\times\R^n$. From the functional Brunn's
principle (Theorem~\ref{extensionofPrekopa}), we conclude that $F$ is
convex.
\end{proof}

The previous proof clarifies the relation between Brunn-Minkowski type
results and Busemann's theorem, which follows from
Theorem~\ref{ball-bobkov} for $p=1$, as we shall see below. Indeed,
even if we are interested in the case of a log-concave function $f$
(which means that $f$ is $0$-concave, and therefore also
$-1/2$-concave), the shortest proof uses the dimensional Pr\'ekopa
theorem for $-1/2$-concave densities (rather than the usual Pr\'ekopa
theorem for log-concave densities).

Actually, combining the previous theorem with one more instance of the
functional Brunn's principle, one may also extend Milman-Pajor's
generalization of Busemann's theorem \cite{MP} to densities with less
concavity.

\begin{proposition}\label{busemann-measures-2}
Let $E$ be a $k$-dimensional subspace of $\R^n$ and $p>0$. If
$\varphi:\R^n \to \R_+$ is a $\gamma$-concave function for some
$\gamma\ge -1/(k+p+1)$, then the function $\Phi: E^\bot\to \R_+$
defined for $v\in E^\bot\setminus \{0\}$ by
$$ \Phi(v)=\abs{v}^{2p-1\over p}\left(\int_{E\oplus\R_+ v} \langle x,
v \rangle^{p-1}\, \varphi(x) \, dx \right)^{-1/p}
$$
 is a gauge on $E^\bot$. 
\end{proposition}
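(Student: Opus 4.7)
The plan is to reduce the statement to Theorem \ref{ball-bobkov} applied on the subspace $E^\bot$, using the marginal
\[
\tilde\varphi(w) := \int_E \varphi(y+w)\, dy, \qquad w \in E^\bot,
\]
as the $\gamma$-concave input (with the same $p$ as in the statement). Two facts then need to be established: (a) that $\Phi(v)$ equals the gauge produced by Theorem \ref{ball-bobkov} applied to $\tilde\varphi$ with exponent $p$; and (b) that $\tilde\varphi$ is $-1/(p+1)$-concave on $E^\bot$, which is exactly the threshold Theorem \ref{ball-bobkov} demands at that exponent.

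For (a), I would split $x = y + w$ with $y \in E$, $w \in \R_+ v$. Since $E \perp E^\bot$, the Lebesgue measure on $E\oplus \R v$ factors as $dx = dy\, dw$ and $\langle x, v\rangle = \langle w, v\rangle$, so Fubini yields
\[
\int_{E\oplus \R_+ v} \langle x,v\rangle^{p-1} \varphi(x)\, dx = \int_{\R_+ v} \langle w,v\rangle^{p-1} \tilde\varphi(w)\, dw.
\]
Parameterizing the ray by $w = sv$ (so $dw = |v|\, ds$ and $\langle w, v\rangle = s|v|^2$) rewrites the right-hand side as $|v|^{2p-1}\int_0^\infty s^{p-1} \tilde\varphi(sv)\, ds$. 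Combined with the prefactor $|v|^{(2p-1)/p}$ in the definition of $\Phi$, the $|v|$ powers cancel and one obtains
\[
\Phi(v) = \left( \int_0^\infty s^{p-1} \tilde\varphi(sv)\, ds\right)^{-1/p},
\]
exactly the gauge considered in Theorem \ref{ball-bobkov}.

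For (b), I would apply the functional Brunn's principle in its higher-dimensional form (a standard consequence of Borell-Brascamp-Lieb, or obtained by iterating Theorem \ref{extensionofPrekopa}): given a convex function on $E\oplus E^\bot$, integrating a suitable negative power of it over the $k$-dimensional slice $E$ produces a function on $E^\bot$ with controlled concavity. The hypothesis $\gamma \geq -1/(k+p+1)$ gives that $h := \varphi^{-1/(k+p+1)}$ is convex on $\R^n$. Choosing $\alpha = p+1$, so that the integration exponent $k+\alpha$ matches the convexity exponent $k+p+1$ of $h$, the principle yields convexity of
\[
w \longmapsto \left( \int_E h(y+w)^{-(k+p+1)}\, dy\right)^{-1/(p+1)} = \tilde\varphi(w)^{-1/(p+1)},
\]
which is to say $\tilde\varphi$ is $-1/(p+1)$-concave on $E^\bot$.

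Once (a) and (b) are established, Theorem \ref{ball-bobkov} applied to $\tilde\varphi$ on $E^\bot \cong \R^{n-k}$ with exponent $p$ immediately gives that $\Phi$ is a gauge on $E^\bot$. Positive homogeneity of degree one is built into the definition: the prefactor $|v|^{(2p-1)/p}$ is tuned precisely so that $\Phi(\lambda v) = \lambda\, \Phi(v)$ for $\lambda > 0$. The only delicate point is the bookkeeping of exponents---the threshold $\gamma \geq -1/(k+p+1)$ is chosen so that the $k$-dimensional integration via the Brunn principle lands $\tilde\varphi$ on exactly the threshold $-1/(p+1)$ allowed by Theorem \ref{ball-bobkov} at exponent $p$. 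This matching of exponents, rather than any new idea, is the place where the proof needs care.
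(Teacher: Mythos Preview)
Your proposal is correct and follows essentially the same approach as the paper: define the marginal $\tilde\varphi$ on $E^\bot$, use the functional Brunn principle (Theorem~\ref{extensionofPrekopa}) to get that it is $-1/(p+1)$-concave, rewrite $\Phi$ via Fubini and a radial change of variable as the Ball--Bobkov gauge of $\tilde\varphi$, and conclude by Theorem~\ref{ball-bobkov}. The only cosmetic difference is your parametrization $w=sv$ versus the paper's $w=s\,v/|v|$, and your explicit remark that the multi-parameter version of Theorem~\ref{extensionofPrekopa} follows by restricting to lines (or iterating), which the paper leaves implicit.
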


This result formally contains Theorem~\ref{ball-bobkov} (the case
$E=\{0\}$, $k=0$), but in the application below we will rather use the
cases $k=n-2$ or $k=(n+1)-2=n-1$ and $p=1$.

\begin{proof}
The homogeneity is clear so it suffices to prove the
convexity. Introduce the function $f:E^\bot \to \R_+$ defined by
$$f(x)=\int_E \varphi(x+y)dy.$$ From the functional Brunn's principle
(Theorem~\ref{extensionofPrekopa}), it follows that $f$ is
$-1/(p+1)$-concave. By Fubini and the normal parametrization of $\R_+
v$ by $s \frac{v}{|v|}$ we have that, for $v\neq 0$,
 \begin{eqnarray*}
 \Phi(v)&=&\abs{v}^{2p-1\over p}\left(\int_{\R_+ v} \langle x, v
 \rangle^{p-1}\, f(x)\, dx \right)^{-1/p} \\ & = & \abs{v}^{2p-1\over
   p}\left(\int_{0}^{+\infty} |v|^{p-1} s^{p-1} \, f\big( s
 \frac{v}{|v|} \big)\, ds \right)^{-1/p} \\ & =&
 \left(\int_0^{+\infty}r^{p-1} \, f(rv) \, dr\right)^{-1/p}.
 \end{eqnarray*}
 So the result follows from Theorem \ref{ball-bobkov} with $E^\bot$ in
 place of $\R^n$.
\end{proof}

We now apply the previous results to extend Busemann's theorem
\cite{busemann} to convex measures. The case of log-concave measures
is due to Kim, Yaskin and Zvavitch \cite{KYZ} who proved it somewhat
differently by applying the usual Busemann's theorem to Ball's body
associated to the measure. The same method could also be used in our
more general setting.  

For any measure $\nu$ on $\R^n$ with a density
$\psi$, $d\nu(x) = \psi(x)\, dx$, and for every hyperplane $H$ we
define the $\nu$-measure of $H$ to be
\[
\nu^+(H)=\int_H\psi(x)dx,
\]
where $dx$ denotes Lebesgue measure on $H$.

\begin{theorem}[Busemann's theorem for convex measures]\label{busemann-measures}
Let $\nu$ be a convex measure with even density $\psi$ on $\R^n$. Then
the function $\Phi$ defined on $\R^n$ by $\Phi(0)=0$ and for $z\neq 0$
\[
\Phi(z)=\frac{|z|}{\nu^+(z^\bot)}=\frac{|z|}{\int_{z^\bot}\psi(x)dx}
\]
is a norm. 
\end{theorem}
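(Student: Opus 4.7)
The plan is as follows. Homogeneity ($\Phi(\lambda z)=|\lambda|\Phi(z)$) and symmetry ($\Phi(-z)=\Phi(z)$) are immediate from $(\lambda z)^\bot=z^\bot$ and the evenness of $\psi$. The content of the theorem is subadditivity: $\Phi(u+v)\le \Phi(u)+\Phi(v)$ for linearly independent $u,v\in \R^n$. My strategy is to fix such $u,v$, reduce to a two-dimensional problem on $F:=\mathrm{span}\{u,v\}$ via a marginal, and then apply Theorem~\ref{ball-bobkov} in dimension two.

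For the reduction, set $E:=F^\bot$ (of dimension $n-2$) and define the marginal
\[
\tilde\psi(x):=\int_E \psi(x+y)\,dy\qquad (x\in F).
\]
For every nonzero $z\in F$, $z^\bot = E\oplus z^{\bot_F}$, where $z^{\bot_F}$ is the line in $F$ orthogonal to $z$, so Fubini gives $\int_{z^\bot}\psi=\int_{z^{\bot_F}}\tilde\psi$. Since $\nu$ is convex, $\psi$ is $-1/n$-concave by Borell's characterization; applying the functional Brunn principle (Theorem~\ref{extensionofPrekopa}, iterated over the $n-2$ variables in $E$) yields that $\tilde\psi$ is $-1/2$-concave on $F\cong \R^2$. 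It is plainly even. Hence it suffices to prove the theorem in dimension two with $\tilde\psi$ in place of $\psi$.

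For the two-dimensional statement, let $J$ denote rotation by $\pi/2$ on $F$, so that for $w\neq 0$, $w^\bot = \R\,Jw$. A linear change of variable gives
\[
\int_{w^\bot}\tilde\psi(x)\,dx = |w|\int_{\R}\tilde\psi(t\,Jw)\,dt = 2|w|\int_0^{+\infty}\tilde\psi(t\,Jw)\,dt,
\]
the last equality by evenness. Thus
\[
\Phi(w)=\frac{1}{2}\left(\int_0^{+\infty}\tilde\psi(t\,Jw)\,dt\right)^{-1}.
\]
Now apply Theorem~\ref{ball-bobkov} with $p=1$: the threshold condition $\gamma\ge -1/(p+1)=-1/2$ is exactly satisfied by $\tilde\psi$, so $w\mapsto \bigl(\int_0^{\infty}\tilde\psi(rw)\,dr\bigr)^{-1}$ is a gauge on $\R^2$. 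Precomposing with the linear isometry $J$ preserves the gauge property, hence $\Phi$ is a gauge on $F$. Together with its evenness, $\Phi$ is a norm on $F$; in particular $\Phi(u+v)\le\Phi(u)+\Phi(v)$, completing the proof.

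The one step that really does the work is the sharp matching between the concavity produced by the marginalization and the concavity required by Theorem~\ref{ball-bobkov}: marginalizing a $-1/n$-concave density over $n-2$ variables gives exactly a $-1/2$-concave density on $\R^2$, and $-1/2$ is precisely the critical exponent $-1/(p+1)$ needed in Theorem~\ref{ball-bobkov} for $p=1$. Everything else (the Fubini identification of $\int_{z^\bot}\psi$, the rotation trick in $\R^2$, and the passage from gauge plus symmetry to norm) is bookkeeping. An alternative, essentially equivalent, route is to apply Proposition~\ref{busemann-measures-2} directly with $k=n-2$ and $p=1$; one then has to twist by $J$ because that proposition yields a gauge associated to the hyperplane $E\oplus \R v$ rather than to $v^\bot$.
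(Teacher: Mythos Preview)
Your proof is correct and follows essentially the same route as the paper's: restrict to a $2$-plane $F$, use the rotation $J$ by $\pi/2$ to turn the hyperplane section integral into a radial integral, and invoke the $p=1$ case of Theorem~\ref{ball-bobkov} after marginalizing to a $-1/2$-concave density on $F$. The only cosmetic difference is that the paper packages the marginalization-plus-Theorem~\ref{ball-bobkov} step into Proposition~\ref{busemann-measures-2} (with $k=n-2$, $p=1$) and applies that directly---exactly the alternative you mention in your final paragraph.
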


\begin{proof}
The homogeneity and symmetry are clear so it suffices to prove the
convexity. This is equivalent to proving that the restriction of
$\Phi$ to any linear $2$-dimensional subspace is convex.

So let $F$ be a $2$-dimensional subspace of $\R^n$ and set $E=
F^\perp$. Introduce the rotation $R$ of angle $\pi/2$ in the plane
$F$. Then for all $z\in F$, $z\neq 0$,
$$ \int_{(Rz)^\bot}\psi = \int_{E\oplus \R z}\psi = 2 \int_{E\oplus
  \R_+ z}\psi .$$ We now apply Proposition~\ref{busemann-measures-2}
with $k=n-2$, $p=1$, and $\varphi=\psi$, which is $-1/n=
-1/(k+p+1)$-concave by assumption,. It gives exactly that $z\to
\Phi(Rz)$ is convex. Since $R$ is linear, the convexity of $\Phi$
follows.
\end{proof}

%\begin{remark} In the same way, when applied for $p$'s other than $1$, Proposition~\ref{busemann-measures-2} generalizes    Milman-Pajor's extension~\cite{}  of Busemann's theorem.
% also holds true for convex measures. More precisely, for any convex measure $\mu$ on $\R^n$, any $k$-dimensional subspace $E$ of $\R^n$, $0\le k\le n-1$ and any $p>-1$ the function $\Phi_p: E^\bot\to \R_+$ defined by
%$\Phi_p(0)=0$ and for every $\theta\in E^\bot\setminus\{0\}$
%\begin{eqnarray}\label{milman-pajor}
% \Phi_p(\theta)=\abs{\theta}^\frac{2p+1}{p+1}\left(\int_{E\oplus\R_+\theta}\langle x,\theta\rangle^pd\mu(x)\right)^\frac{-1}{p+1}
%\end{eqnarray}
% is a gauge on $E^\bot$. 
%\end{remark}

\begin{remark} 
Since the restriction of a convex measure to a convex set $K$ with
non-empty interior remains a convex measure, the theorem also implies
that the function
\[
z\to \frac{|z|}{\nu^+(K\cap z^\bot)}=\frac{|z|}{\int_{K\cap
    z^\bot}\psi(z)dz}
\]
is a norm. 
\end{remark}

\subsection{Campi-Gronchi type results for convex measures}

We now generalize a theorem of Campi and Gronchi \cite{CG_product} on
shadow systems. The inspiring argument of Campi and Gronchi relies on
the formula $|K|=\omega_n\int_{S^{n-1}} h_K^{-n}\, d\sigma$ (followed
by a stereographic projection) and on the dimensional Pr\'ekopa
inequality. Our more general situation requires a slightly different
look at the problem and it turns out that the Busemann theorem for
convex measures (Theorem~\ref{busemann-measures}) is a good tool to
work with.

% to more general convex measures. 

Shadow systems were defined by Shephard \cite{Shephard_shadow} in the
following way. Let $C$ be a closed convex set in $\R^{n+1}$. Let
$(e_1, \cdots, e_{n+1})$ be an orthonormal basis of $\R^{n+1}$, we
write $\R^{n+1}=\R^n\oplus\R e_{n+1}$, so that $\R^n=e_{n+1}^\bot$
. Let $\theta\in \R^n$, with $|\theta|=1$. For every $t\in\R$ let
$P_t$ be the projection onto $\R^n$ parallel to $e_{n+1}-t\theta$: for
$x\in \R^n$ and $s\in \R$,
$$P_t(x+se_{n+1})= x + t s \theta.$$ We denote $K_t=P_t C\subset
\R^n$.  Then the family $(K_t)$ is a shadow system of convex sets. The
next theorem extends the Campi-Gronchi result \cite{CG_product},
originally proved for Lebesgue measure, to the setting of convex
measures.

\begin{theorem}\label{campi-gronchi-measures}
Let $\nu$ be a measure on $\R^n$ with a density $\psi$ which is even
and $\gamma$-concave on $\R^n$ for some $\gamma\ge -1/(n+1)$. Let
$(K_t)$ be a shadow system of centrally symmetric convex sets. Then
the function $t\to \nu(K_t^\circ)^{-1}$ is convex.
\end{theorem}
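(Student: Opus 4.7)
My plan is to lift the problem from $\R^n$ to $\R^{n+1}$, express $\nu(K_t^\circ)$ as an integral over a hyperplane section of $C^\circ$, and then invoke the Busemann theorem for convex measures (Theorem~\ref{busemann-measures}) on that extended space. Since the bodies $K_t$ are centrally symmetric about the origin, I first replace $C$ by $(C-C)/2$; the shadow system is unchanged, and I may then assume $C\subset\R^{n+1}$ is symmetric, hence $C^\circ$ is symmetric as well.

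The main identification is as follows. Set $V_t:=(e_{n+1}-t\theta)^\bot\subset\R^{n+1}$ and let $\psi_t:\R^n\to V_t$ be the linear injection $\psi_t(y)=y+t\langle y,\theta\rangle e_{n+1}$, whose inverse is the projection of $V_t$ onto $\R^n$ parallel to $e_{n+1}$. A one-line computation with support functions yields $h_{K_t}(y)=h_C(\psi_t(y))$ for all $y\in\R^n$, so that $K_t^\circ$ corresponds via $\psi_t$ precisely to $C^\circ\cap V_t$. The Gram matrix of $\psi_t$ is $I+t^2\theta\theta^\top$, so its Jacobian (to the $n$-dimensional Hausdorff measure on $V_t$) equals $\sqrt{1+t^2}=|e_{n+1}-t\theta|$. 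Extending $\psi$ to the function $\tilde\psi$ on $\R^{n+1}$ by $\tilde\psi(v_0+v_{n+1}e_{n+1}):=\psi(v_0)$ and changing variables gives
\[
\nu(K_t^\circ)\;=\;\frac{1}{|e_{n+1}-t\theta|}\int_{C^\circ\cap V_t}\tilde\psi(v)\,d\mathcal{H}^n(v).
\]

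To apply Theorem~\ref{busemann-measures} on $\R^{n+1}$ I need the measure $\tilde\mu$ with density $\tilde\psi\cdot 1_{C^\circ}$ to be a convex measure with even density. Evenness of $\tilde\psi$ follows from that of $\psi$, and symmetry of $C^\circ$ has been arranged; the restriction of a convex measure to a convex body remains a convex measure (the remark following Theorem~\ref{busemann-measures}), so it suffices to verify that $\tilde\psi$ is $-1/(n+1)$-concave on $\R^{n+1}$. Since $\tilde\psi$ depends only on $v_0$ and $\psi$ is $\gamma$-concave with $\gamma\ge-1/(n+1)$, this is immediate. Theorem~\ref{busemann-measures} then asserts that $z\mapsto|z|/\tilde\mu^+(z^\bot)$ is a norm on $\R^{n+1}$; evaluating at $z=e_{n+1}-t\theta$ returns exactly $\nu(K_t^\circ)^{-1}$, and the restriction of a norm to an affine line is convex, which is the desired conclusion.

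The only delicate point I anticipate is the dimensional bookkeeping: one has to see that adjoining one trivial variable costs no concavity, so that the hypothesis $\gamma\ge-1/(n+1)$ on $\R^n$ is precisely matched to the requirement that $\tilde\mu$ be a convex measure on $\R^{n+1}$. Everything else is a routine transcription of the shadow system into a hyperplane section of $C^\circ$ plus the observation that $\nu(K_t^\circ)^{-1}$ fits into Busemann's conclusion along one particular affine line.
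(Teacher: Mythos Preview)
Your proof is correct and follows essentially the same route as the paper's: lift to $\R^{n+1}$, identify $K_t^\circ$ with the section $C^\circ \cap (e_{n+1}-t\theta)^\perp$ via the adjoint $P_t^*$ (your $\psi_t$), compute the Jacobian $\sqrt{1+t^2}$, extend $\psi$ trivially in the $e_{n+1}$-direction, and apply Busemann's theorem for convex measures on $\R^{n+1}$. Your explicit symmetrization $C\rightsquigarrow (C-C)/2$ is a clean way to make the density $\tilde\psi\cdot 1_{C^\circ}$ globally even, a point the paper handles slightly more implicitly by observing that each section $C^\circ\cap V_t$ is symmetric.
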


\begin{proof} Write
\begin{eqnarray*}
K_t^\circ &=& \{x \in \R^n \, : \ \langle x, P_t y\rangle \le 1 ,
\ \forall y \in C\} = \{x \in \R^n \, : P_t^\ast x \in C^\circ \}
\end{eqnarray*}
where $P_t^\ast$ is the adjoint of $P_t$. Observe that $P_t^\ast$ is
the projection on $(e_{n+1}-t\theta)^\bot$ parallel to $e_{n+1}$ and
that $P P_t^\ast = P$, where $P$ denotes the orthogonal projection on
$\R^n=e_{n+1}^\bot$. It follows that
\[
K_t^\circ=P(C^\circ\cap (e_{n+1}-t\theta)^\bot), 
\]
We now perform the change of variables $x=P(y)$ which is a
diffeomorphism from $(e_{n+1}-t\theta)^\bot$ onto $\R^n$ with Jacobian
equal to $1/\sqrt{1+t^2}$. We get
\[
\nu(K_t^\circ)=\int_{P(C^\circ\cap
  (e_{n+1}-t\theta)^\bot)}\psi(x)dx=\int_{C^\circ\cap
  (e_{n+1}-t\theta)^\bot}\psi(P(y))\frac{dy}{\sqrt{1+t^2}}.
\]
Since $K_t$ is symmetric, so is $C^\circ\cap (e_{n+1}-t\theta)^\bot$.
Thus, the function $\varphi$ defined on $\R^{n+1}$
by $$\varphi(y)=1_{C^\circ(y)}\, \psi(P(y))$$ is $-1/(n+1)$-concave on
$\R^{n+1}$ and its restriction to $(e_{n+1}-t\theta)^\bot$ is even. It
follows that the measure $\nu$ with density $\varphi$ on $\R^{n+1}$ is
a convex measure. Since
\[
\nu(K_t^\circ)=\frac{\int_{(e_{n+1}-t\theta)^\bot}\varphi(y)dy}{\sqrt{1+t^2}}=\frac{\nu^+((e_{n+1}-t\theta)^\bot)}{|e_{n+1}-t\theta|},
\]
we conclude from Busemann's theorem for measures
(Theorem~\ref{busemann-measures} with $n+1$ in place of $n$) that the
function $t\to \nu(K_t^\circ)^{-1}$ is convex.
\end{proof}

\begin{remark}
From Theorem \ref{campi-gronchi-measures}, one easily deduces a new
and simple proof of the following result established by Meyer-Reisner
\cite{MR} in the log-concave case and generalized by Bobkov
\cite{bobkov}. For $a\in\R^n$, denote $B(a)=\{x\in\R^n : |\langle
x,a\rangle|\le 1\}$. Let $\nu$ be a measure on $\R^n$ with a density
$\psi$ which is even and $\gamma$-concave on $\R^n$ for some
$\gamma\ge -1/(n+1)$. Then the function $W(a)=\nu(B(a))^{-1}$ is
convex on $\R^n$.

Indeed, we can use the simplest shadow system: take $C=[-(a+e_{n+1}),
  a+e_{n+1}]$ and $\theta\in S^{n-1}$. One gets $K_t=[-(a+t\theta),
  a+t\theta]$ and $K_t^\circ=B(a+t\theta)$. Thus
Theorem~\ref{campi-gronchi-measures} implies that $t\to
W(a+t\theta)$ is convex for all $a\in\R^n$ and $\theta\in S^{n-1}$,
which implies that $W$ is convex on $\R^n$.
\end{remark}

Arguments similar to those in the proof of Theorem
\ref{campi-gronchi-measures} also apply to the following
generalization of shadow systems.

\begin{proposition}\label{prop:gene_shadow}
Let $n,N$ be positive integers and $\mathcal{C}$ be a centrally
symmetric closed convex set in $\R^n\times\R^N$. Let $\theta\in
S^{n-1}$. For $t\in\R^N$ and $(x,y)\in\R^n\times\R^N$, we define
$P_t(x,y)=x+\langle y,t\rangle\theta$ and $K_t=P_t(\mathcal{C})$. Let
$\nu$ be a measure on $\R^n$ with a density $\psi$ with respect to
Lebesgue measure that is even and $-1/(n+1)$-concave on $\R^n$. Then
\begin{itemize}
\item[i)] $t\to \nu(K_t^\circ)^{-1}$ is convex on $\R^N$.
\item[ii)] if $\mathcal{C}$ and $\psi$ are symmetric with respect to
  $\theta^\bot$ then $t\to\nu((K_t)^\circ)^{-1}$ is even and convex on
  $\R^N$.
\end{itemize}
\end{proposition}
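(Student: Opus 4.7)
The plan is to reduce part~(i) to the one-parameter Campi--Gronchi result (Theorem~\ref{campi-gronchi-measures}) by restricting to an arbitrary line in $\R^N$, and then to derive the additional evenness in part~(ii) by exploiting the $\theta^\perp$-symmetry of the data.

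For~(i), fix $t_0 \in \R^N$ and $\tau \in \R^N\setminus\{0\}$ and consider the line $s \mapsto t_0 + s\tau$ in $\R^N$; it suffices to show that $s\mapsto \nu(K_{t_0+s\tau}^\circ)^{-1}$ is convex. The decisive observation is that
\[ P_{t_0+s\tau}(x,y) \;=\; \bigl(x + \langle y, t_0\rangle\,\theta\bigr) \;+\; s\,\langle y,\tau\rangle\,\theta, \]
so if we introduce the linear map $L: \R^n \times \R^N \to \R^n \times \R$ defined by $L(x,y) = (x + \langle y,t_0\rangle\,\theta,\;\langle y,\tau\rangle)$ and set $\mathcal{C}' := L(\mathcal{C})$, then
\[ K_{t_0+s\tau} \;=\; \{\,x' + s\,y'\,\theta \,:\, (x',y') \in \mathcal{C}'\,\}, \]
which is precisely the standard one-parameter shadow system in $\R^n$ in direction $\theta$ generated by the centrally symmetric convex set $\mathcal{C}' \subset \R^n \times \R$. (Central symmetry of $\mathcal{C}'$ follows from linearity of $L$ and of $\mathcal{C}$, and the sets $K_{t_0+s\tau}$ are then centrally symmetric as well.) Theorem~\ref{campi-gronchi-measures}, applied to $\mathcal{C}'$ with the same measure $\nu$, yields the convexity of $s\mapsto \nu(K_{t_0+s\tau}^\circ)^{-1}$. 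Since $t_0,\tau$ were arbitrary, the function $t\mapsto \nu(K_t^\circ)^{-1}$ is convex on every line of $\R^N$, hence convex on $\R^N$.

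For~(ii), given~(i) it remains to check evenness. Let $\sigma_\theta(u) = u - 2\langle u, \theta\rangle\theta$ be the reflection of $\R^n$ through $\theta^\perp$. Directly from the definition,
\[ \sigma_\theta(K_t) \;=\; \bigl\{\sigma_\theta(x) - \langle y, t\rangle\,\theta \,:\, (x,y) \in \mathcal{C}\bigr\}, \]
and the invariance of $\mathcal{C}$ under $(x,y)\mapsto(\sigma_\theta(x),y)$ allows the substitution $x\leftrightarrow \sigma_\theta(x)$ inside the defining set, giving $\sigma_\theta(K_t) = K_{-t}$. Since $\sigma_\theta$ is an orthogonal involution, this implies $K_{-t}^\circ = \sigma_\theta(K_t^\circ)$, and then the change of variables $u = \sigma_\theta(v)$ together with the $\theta^\perp$-symmetry of $\psi$ yields $\nu(K_{-t}^\circ) = \nu(K_t^\circ)$, which is the desired evenness.

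The only potentially delicate point is ensuring that $\mathcal{C}'$ is a closed convex set so that Theorem~\ref{campi-gronchi-measures} applies directly; this is automatic when $\mathcal{C}$ is compact (the case of interest for the applications), and otherwise a routine truncation and approximation argument reduces to that case. Beyond this technicality, the essential geometric content is simply the identification of the restriction of an $N$-parameter shadow system along any line as a genuine one-parameter shadow system of a suitable linear image of $\mathcal{C}$.
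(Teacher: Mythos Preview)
Your proof is correct, and for part~(i) it takes a genuinely different route from the paper's. The paper works directly in $\R^n\oplus\R^N$: it identifies $K_t^\circ$ with the orthogonal projection onto $\R^n$ of the $n$-dimensional section $\mathcal{C}^\circ\cap\bigl(E\oplus\R(\theta+\sum_i t_i e_{n+i})\bigr)$, where $E=\theta^\perp\cap\R^n$, and then applies the Busemann--Milman--Pajor inequality (Proposition~\ref{busemann-measures-2} with $p=1$, $k=n-1$) to the $-1/(n+1)$-concave function $y\mapsto 1_{\mathcal{C}^\circ}(y)\,\psi(Py)$ on $\R^{n+N}$, composing at the end with the affine map $t\mapsto \theta+\sum_i t_i e_{n+i}$. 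Your approach instead reduces everything to the already established one-parameter case (Theorem~\ref{campi-gronchi-measures}) via the linear map $L$, exploiting the elementary fact that convexity on $\R^N$ is equivalent to convexity along every line. This is more economical and conceptually clean; the paper's approach, by contrast, makes the role of Busemann's theorem in the $N$-parameter setting explicit and avoids any discussion of whether $L(\mathcal{C})$ is closed. On that last point, note that no truncation is actually needed: since $P_s$ is continuous, $P_s(\mathcal{C}')$ and $P_s(\overline{\mathcal{C}'})$ have the same closure and hence the same polar, so one may simply replace $\mathcal{C}'$ by its closure before invoking Theorem~\ref{campi-gronchi-measures}. For part~(ii) your argument is essentially identical to the paper's.
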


\begin{proof} (i) The proof follows that of Theorem~\ref{campi-gronchi-measures}. Again, the result relies on a proper application of Busemann's theorem for measures (as before in dimension $n+1$ for $n$-dimensional sections); actually, it will be more handy, but equivalent,  to go back to the formulation of Proposition~\ref{busemann-measures-2} rather than to quote Theorem~\ref{busemann-measures}.

We work on $\R^n \oplus \R^N$.  For $t=(t_1,\dots,t_N)\in\R^N$, the
linear map $P_t$ in the proposition is the projection onto $\R^n$
parallel to the $N$-dimensional subspace $\span(e_{n+1}-t_1\theta,
\dots, e_{n+N}-t_N\theta)$. Introducing the $(n-1)$ dimensional
subspace $E=\theta^\bot\cap\R^n$, we observe that $P_t^*$ is the
projection onto the $n$-dimensional space $E\oplus\R(\theta
+\sum_{i=1}^Nt_ie_{n+i})$ parallel to $\R^N$. Let us denote by $P$ the
orthogonal projection onto $\R^n$. Then one has that
$$K_t=P(C^\circ\cap(E\oplus\R(\theta + \sum_{i=1}^Nt_ie_{n+i}))).$$
The projection $P$ induces a diffeomorphism from $E\oplus\R(\theta +
\sum_{i=1}^Nt_ie_{n+i})$ to $\R^n$ with Jacobian $1/\sqrt{1+
  t_1^2+\cdots+t_N^2}$. Therefore we get
\[
\nu(K_t^\circ)=\int_{C^\circ\cap(E\oplus\R(\theta +
  \sum_{i=1}^Nt_ie_{n+i}))}\frac{\psi(P(y))}{\sqrt{1+
    t_1^2+\cdots+t_N^2}}dy.
\]
The function $\varphi(y)= 1_{C^\circ}(y)\psi(P(y))$ is
$-1/(n+1)$-concave on $\R^n \simeq E\oplus\R(\theta +
\sum_{i=1}^Nt_ie_{n+i})$. Using Proposition~\ref{busemann-measures-2}
for $\varphi$, $p=1$ and $k=n-1$ we can conclude, after composition
with the linear map $t\to v=\theta + \sum_{i=1}^Nt_ie_{n+i}$, that the
function $t\to \nu((K_t)^\circ)^{-1}$ is convex.

(ii) Let us denote by $\sigma_\theta$ the orthogonal symmetry with
respect to $\theta^\bot$ in $\R^n\times\R^N$. Since $P_{-t}=
\sigma_\theta\circ P_t\circ\sigma_\theta$, we deduce that
\[
K_{-t}=P_{-t}\mathcal{C}=\sigma_\theta\circ P_t\circ\sigma_\theta
\mathcal{C}=\sigma_\theta\circ P_t \mathcal{C}=\sigma_\theta K_t.
\]
Therefore, using $\psi\circ\sigma_\theta=\psi$, we get
$$\nu((K_{-t})^\circ)= \nu((\sigma_\theta K_{t})^\circ)=\nu(\sigma_\theta ((K_{t})^\circ))
=\nu(K_t^\circ).$$ 
We conclude that $t\to \nu((K_t)^\circ)^{-1}$ is even.
\end{proof}

Let us mention that such generalized shadow systems and even more general
notions were considered by Shephard in his seminal article
\cite{Shephard_shadow}. 

%In particular, he observed that, as is the
%case for one-parameter shadow systems, the generalized ones also
%possess the property that $t\to |K_t|$ is convex. This is a
%consequence of the following expression of the volume of $K_t$ in
%terms of the mixed volume of $\mathcal{C}$
%$$ |K_t|=V(\mathcal{C}[n],[0, e_{n+1}-t_1\theta], \cdots, [0,
%  e_{n+N}-t_N\theta]).
%$$

Lastly, we state a key corollary that will be used in the proof of
Theorem \ref{thm:main}.

\begin{corollary}
\label{cor:GCCviaCG}
Let $r\ge 0$, $C$ be an origin-symmetric convex set in $\R^N$, let
$\theta\in S^{n-1}$ and $y_1,\dots, y_N\in\theta^\bot$. Let $\nu$ be a
measure on $\R^n$ with a density $\psi$ which is $-1/(n+1)$-concave on
$\R^n$, even and symmetric with respect to $\theta^\bot$. Then, the
map
$$
(t_1,\dots,t_N)\to 
\nu(([y_1+t_1\theta \cdots y_N+t_N\theta]C+r B_2^n)^\circ)^{-1}
$$
is even and convex on $\R^N$.
\end{corollary}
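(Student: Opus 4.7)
The plan is to realize the set $[y_1+t_1\theta\cdots y_N+t_N\theta]C + rB_2^n$ as the image $P_t(\mathcal{C})$ of a suitable centrally symmetric, closed, convex set $\mathcal{C} \subset \R^n \times \R^N$ under the projection $P_t(x,c) = x + \langle c, t\rangle\theta$ of Proposition~\ref{prop:gene_shadow}, and then invoke part (ii) of that proposition directly. The corollary then follows with no additional work.

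The natural candidate is
$$\mathcal{C} = \bigl\{(x,c) \in \R^n \times \R^N :\, c \in C,\ x - \textstyle\sum_{i=1}^N c_i y_i \in rB_2^n\bigr\},$$
which is the image of $rB_2^n \times C$ under the linear isomorphism $(u,c) \mapsto (u+\sum c_i y_i,\, c)$, hence convex and, after replacing $C$ by its closure (which does not affect any polar), closed. For $(x,c) \in \mathcal{C}$, writing $x = u + \sum c_i y_i$ with $u \in rB_2^n$ gives
$$P_t(x,c) = u + \sum_{i=1}^N c_i y_i + \langle c,t\rangle\theta = u + \sum_{i=1}^N c_i(y_i+t_i\theta),$$
so $P_t(\mathcal{C}) = [y_1+t_1\theta\cdots y_N+t_N\theta]C + rB_2^n$, which is exactly the set $K_t$ whose polar we want to analyse.

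It remains to verify the two symmetry hypotheses of Proposition~\ref{prop:gene_shadow}(ii). For central symmetry of $\mathcal{C}$: if $(x,c) \in \mathcal{C}$, then $-c \in C$ since $C$ is origin-symmetric, and $-x - \sum(-c_i)y_i = -(x - \sum c_i y_i) \in rB_2^n$, so $(-x,-c) \in \mathcal{C}$. For symmetry of $\mathcal{C}$ with respect to $\theta^\bot$, realized in $\R^n\times\R^N$ by $(x,c) \mapsto (\sigma_\theta x,\, c)$: since each $y_i$ lies in $\theta^\bot$ and is therefore fixed by $\sigma_\theta$, we obtain $\sigma_\theta x - \sum c_i y_i = \sigma_\theta(x - \sum c_i y_i) \in \sigma_\theta(rB_2^n) = rB_2^n$, so $\sigma_\theta$ preserves $\mathcal{C}$. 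The hypotheses placed on $\psi$ in the corollary are identical to those required by Proposition~\ref{prop:gene_shadow}(ii).

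There is no real obstacle beyond setting up $\mathcal{C}$ correctly; the one small observation is that the inhomogeneous Minkowski summand $rB_2^n$ must be absorbed into the ambient convex set by taking $\mathcal{C}$ to be a tube around the graph $\{(\sum c_i y_i, c) : c \in C\}$ rather than the graph itself. Once the identification $K_t = P_t(\mathcal{C})$ is in hand, both evenness and convexity of $t \mapsto \nu(K_t^\circ)^{-1}$ follow immediately from Proposition~\ref{prop:gene_shadow}(ii).
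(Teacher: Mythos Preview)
Your proof is correct and is essentially identical to the paper's argument: the set $\mathcal{C}$ you construct coincides with the paper's $\mathcal{C}=[y_1+e_{n+1}\cdots y_N+e_{n+N}]C + rB_2^n$, since an element $\sum c_i(y_i+e_{n+i}) + u$ of the latter is exactly $(u+\sum c_iy_i,\,c)$ in the $\R^n\times\R^N$ coordinates. The only cosmetic difference is that the paper writes $\mathcal{C}$ as a Minkowski sum and computes $P_t(\mathcal{C})$ using linearity of $P_t$, whereas you unpack the same set via a defining condition and a shear map.
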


\begin{proof}
Let $\mathcal{C}=[y_1+e_{n+1}\cdots y_N+e_{n+N}]C+r B_2^n$. Then
$\mathcal{C}$ is an origin-symmetric convex set in $\R^n\times\R^N$
which is symmetric with respect to $\theta^\bot$ in $\R^{n+N}$ since
$[y_1+e_{n+1}\cdots y_N+e_{n+N}]C\subset \theta^\bot$. Let $P_t :
\R^n\times\R^N\to\R^n$ be defined as in Proposition
\ref{prop:gene_shadow} and let $K_t=P_t\mathcal{C}$. Then one has
\begin{eqnarray*}
K_t &=& P_t([y_1+e_{n+1}\cdots y_N+e_{n+N}]C+r B_2^n)\\
&=& [P_t(y_1+e_{n+1})\cdots P_t(y_N+e_{n+N})]C+r P_t B_2^n\\
&=& [y_1+t_1\theta \cdots y_N+t_N\theta]C+r B_2^n.
\end{eqnarray*}
By $ii)$ of the
preceding proposition we can conclude.
\end{proof}

%%%%%%%%%%%%%%%%%%%%SECTION%%%%%%%%%%%%%%%%%%%%%

\section{Proof of Theorem \ref{thm:main}}

We start by recalling the rearrangement inequalities that are at the
heart of the argument. Recall from Section~\S2 the notation $g^\ast$ for the
radially decreasing rearrangement of a nonnegative function $g$. The
Brascamp-Lieb-Luttinger inequality~\cite{BLL}, which was actually
anticipated by Rogers~\cite{Rogers} as pointed out
in~\cite{WangMadiman}, states that given $k$ (integrable) nonnegative
functions $g_1, \ldots, g_k$ on $\R$ and $N k$ constants
$\{c_{i,j}\}_{i\le k, j\le N}$ we have that
\begin{equation}\label{rbbl}
\int_{\R^N} \prod_{i=1}^k g_i\big(c_{i1} s_1+ \ldots + c_{iN} s_N\big)
\, ds_1 \ldots ds_N \le \int_{\R^N}\prod_{i=1}^k g_i^\ast\big(c_{i1}
s_1 + \ldots + c_{iN} s_N\big) \, ds_1 \ldots ds_N.
\end{equation}

Christ~\cite{Christ} derived a useful consequence of the
Rogers-Brascamp-Lieb-Luttinger inequality. In particular, it was shown
in~\cite{PaoPiv_probtake} that Christ's formulation is very well
adapted to geometric inequalities in convexity, as it provides a handy
interface between generalized Steiner symmetrization (as
in~\eqref{eqn:F_Y} below) and the Rogers-Brascamp-Lieb-Luttinger
inequality (see also~\cite[page 15]{BaeLoss} and \cite[Lemma
  3.3]{GT_radius}). The result is as follows. 
  
  \begin{theorem}[\cite{Christ,PaoPiv_probtake}]\label{thm:GCC->max} 
Let $F:(\R^n)^N=\otimes_{i=1}^N \R^n
\rightarrow \R_{+}$. We have that 
     \begin{multline}
        \label{eqn:GCC->max}
    \int_{(\R^n)^N} F(x_1,\ldots,x_N)\, f_1(x_1)\cdots f_N(x_N) \, dx_1 \ldots dx_N \\
\le 
\int_{(\R^n)^N} F(x_1,\ldots,x_N)\, f_1^\ast (x_1)\cdots f_N^\ast (x_N) \, dx_1 \ldots dx_N
      \end{multline}
holds for any integrable $f_1, \ldots, f_{N}:\R^n\to \R_+$ provided that
$F$ satisfies the following condition: for every $z\in
S^{n-1}\subset \R^n$ and for every $Y=(y_1,\ldots,y_N)\subset
(z^{\perp})^N\subset (\R^n)^N$, the function $F_{z,Y}:\R^N \to \R_+$
defined by
 \begin{equation}
  \label{eqn:F_Y}
  F_{z,Y}(t):=F(y_1+t_1z, \ldots, y_N+t_N z). 
\end{equation} 
 is even and quasi-concave. 
\end{theorem}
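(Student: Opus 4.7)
The plan is to establish the inequality via iterated Steiner symmetrization, each step reducing to a one-dimensional instance of the Rogers-Brascamp-Lieb-Luttinger inequality \eqref{rbbl}. Fix a direction $z \in S^{n-1}$ and decompose $\R^n = z^\perp \oplus \R z$, writing $x_i = y_i + t_i z$ with $y_i \in z^\perp$ and $t_i \in \R$. For $Y = (y_1,\dots,y_N) \in (z^\perp)^N$, set $g_i^Y(t) := f_i(y_i + tz)$; Fubini's theorem rewrites the left-hand side of \eqref{eqn:GCC->max} as
\[
\int_{(z^\perp)^N} \bigg( \int_{\R^N} F_{z,Y}(t) \prod_{i=1}^N g_i^Y(t_i) \, dt \bigg) \, dy_1 \cdots dy_N.
\]
The first goal is to show, for each fixed $Y$, that the inner integral increases when each $g_i^Y$ is replaced by its one-dimensional symmetric decreasing rearrangement $(g_i^Y)^\ast$. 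Reintegrating over $Y$ then yields $\int F(x)\prod_i f_i(x_i)\,dx \le \int F(x)\prod_i f_i^\ast(x_i|z)\,dx$, i.e., a single Steiner symmetrization in direction $z$ raises the integral.

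To handle the inner integral for fixed $Y$, apply the layer-cake formula \eqref{eqn:layer_cake} to $F_{z,Y}$ and swap integrals:
\[
\int_{\R^N} F_{z,Y}(t) \prod_i g_i^Y(t_i) \, dt = \int_0^\infty \bigg( \int_{K_s} \prod_i g_i^Y(t_i) \, dt \bigg) ds, \quad K_s := \{F_{z,Y} > s\}.
\]
Since $F_{z,Y}$ is even and quasi-concave by hypothesis, each $K_s$ is an origin-symmetric convex subset of $\R^N$. Hence it suffices to prove the following one-dimensional rearrangement lemma: for every origin-symmetric convex $K \subset \R^N$ and nonnegative integrable $g_1,\dots,g_N : \R \to \R_+$,
\[
\int_K \prod_{i=1}^N g_i(t_i) \, dt \le \int_K \prod_{i=1}^N g_i^\ast(t_i) \, dt.
\]

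For the lemma, approximate $K$ by a decreasing sequence of finite intersections of origin-symmetric slabs $\{t : |\langle a_j, t\rangle| \le 1\}$ (every closed origin-symmetric convex set is such a countable intersection). Then $1_K$ is the pointwise decreasing limit of $\prod_{j \le M} 1_{[-1,1]}(\langle a_j, t\rangle)$ as $M \to \infty$. After a preliminary truncation of the $g_i$'s to bounded, compactly supported functions (removed at the end by monotone convergence), the integral on the left of the lemma becomes
\[
\int_{\R^N} \prod_{j \le M} 1_{[-1,1]}(\langle a_j, t\rangle) \cdot \prod_{i=1}^N g_i(\langle e_i, t\rangle) \, dt,
\]
which is exactly of the form treated by \eqref{rbbl}, viewed as a product of functions of linear forms $\langle a_j, t\rangle$ and $\langle e_i, t\rangle$. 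Applying \eqref{rbbl} and noting that $1_{[-1,1]}$ is already its own rearrangement yields the comparison at the finite-$M$, truncated level; letting $M \to \infty$ and then lifting the truncation closes the lemma.

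To pass from a single Steiner symmetrization to the full rearrangement $f_i^\ast$, iterate the one-direction inequality over a countable dense sequence of directions $(z_k) \subset S^{n-1}$. Classical convergence results for Steiner symmetrization (see, e.g., \cite{LL_book}) guarantee that the iterated symmetrizations of each $f_i$ converge in $L^1$ to $f_i^\ast$; combined with a truncation of $F$ if necessary, monotone/dominated convergence delivers \eqref{eqn:GCC->max}. The main technical obstacle is the slab-approximation and associated integrability bookkeeping in the lemma, since the slab indicators are not compactly supported; this is also precisely the step where only the symmetric convex level sets $K_s$ of $F_{z,Y}$ intervene, explaining why even quasi-concavity (as opposed to concavity) of $F_{z,Y}$ is sufficient.
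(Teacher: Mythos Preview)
Your argument is correct and follows essentially the same route as the paper's sketch: approximate the symmetric convex level sets of $F_{z,Y}$ by intersections of symmetric slabs to reduce to the Rogers--Brascamp--Lieb--Luttinger inequality~\eqref{rbbl}, use the layer-cake formula to pass from indicators of convex sets to even quasi-concave weights, apply Fubini along the lines $y_i+\R z$ to obtain the one-step Steiner-symmetrization inequality, and then iterate over a dense sequence of directions to reach the full rearrangements $f_i^\ast$. The only cosmetic difference is the order of exposition (you Fubini first and then layer-cake, the paper does the reverse), but the ingredients and their use are identical.
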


Briefly, the argument from~\eqref{rbbl} towards this result,  goes as follows. First, by putting
extra functions of the form $1_{[-r_j, r_j]}$ and using the fact that
a symmetric convex set is the intersection of symmetric strips
$\{|x\cdot a_j|\le r_j\}$, we see that the inequality~\eqref{rbbl}
remains true if we integrate on a symmetric convex subset of
$\R^N$. Then, by the decomposition~\eqref{eqn:layer_cake} of a
function into its level sets, we see that~\eqref{rbbl} remains true if
we integrate against an even quasi-concave function $G$ on $\R^N$. In
particular, we have for $N$ nonnegative functions $g_i$ on $\R$, that
\begin{equation*}\label{rbbl2}
\int_{\R^N} G(s)\prod_{i=1}^N g_i\big(s_i)  \, ds_1 \ldots ds_N \le 
\int_{\R^N} G(s)  \prod_{i=1}^N g_i^\ast\big(s_i\big) \, ds_1 \ldots ds_N.
\end{equation*}
Then, to move from $n=1$ to arbitrary $n\gr 1$, i.e. for functions
$f_i$ on $\R^n$ and integration against $F$ on $\R^{nN}$ as in the theorem, we
approximate the rearranged function $f_i^\ast$ by a suitable sequence
of Steiner symmetrizations $f_i^*(\cdot|\theta)$, $\theta\in
S^{n-1}$. We can use, by Fubini, on $N$ affine lines in $\R^n$ 
parallel to $\theta$, $y_i + \R \theta$, the previous rearrangement inequality with the
$g_i$'s being the restrictions of the $f_i$'s, the
condition~\eqref{eqn:F_Y} guaranteeing exactly that the
restriction $G$ of $F$ is indeed even and quasi-concave on $\R^N$.  We
refer to~\cite{PaoPiv_probtake} for further details.

With this rearrangement inequality in hand, we can put together all
the pieces needed for the proof of Theorem~\ref{thm:main}. Let us
recall the class of measures $\nu$ on $\R^n$ we can work with, namely
the spherically-invariant measures with
\begin{equation}\label{condnu1}
d\nu(x) = \rho(|x|) \, dx \quad \textrm{with } \rho: [0, +\infty) \to [0, +\infty) \textrm{ decreasing},
\end{equation}
together with the sub-class of those of the form
\begin{equation}\label{condnu2}
d\nu(x) = k^{-(n+1)}(|x|) \, dx \quad \textrm{with } k: [0, +\infty) \to [0, +\infty] \textrm{ convex increasing}.
\end{equation}

We will prove the following more general statement
(Theorem~\ref{thm:main} corresponds to $(ii)$ below, with $r=0$) .
%For the reader's convenience, we reproduce its statement below and for simplicity, we write $[x_i]=[x_1\cdots x_N]$.

\begin{theorem}
  \label{thm:main_Lebesgue} 
   Let $X_1,\ldots, X_N$ be $N$ independent random vectors in $\R^n$
   whose laws are in $\mathcal{P}_n$ and let $r\ge0$.
  \begin{itemize}
  \item[(i)] If $C$ is an origin-symmetric convex body in $\R^N$ and
    $\nu$ a measure on $\R^n$ of the form~\eqref{condnu1}, then
   \begin{equation}
     \label{eqn:main_Lebesgue1-1}
      \EE\big[\nu\big(([X_1\cdots X_N]C+r B_2^n)^{\circ}\big) \big] \leq
   \EE\big[\nu\big(([X_1^*\cdots X_N^*]C +r B_2^n)^{\circ}\big) \big] 
  \end{equation}
    where $X_1^*, \ldots,X_N^*$ are independent random vectors in
    $\R^n$ whose densities are the symmetric decreasing rearrangement
    of the densities of $X_1,\ldots, X_N$.  Moreover if $\nu$ is of
    the form~\eqref{condnu2} we also have that for every $t\ge0$,
  \begin{equation}
    \label{eqn:main_Lebesgue1-2}
    \PP \big[\nu\big(([X_1\cdots X_N]C+r B_2^n)^{\circ}\big)\ge t\big] \leq
    \PP \big[\nu\big(([X_1^*\cdots X_N^*]C +r B_2^n)^{\circ}\big)\ge t\big].
  \end{equation}

\item[(ii)] If $C$ is an unconditional convex body in $\R^N$ and $\nu$
  a measure on $\R^n$ of the form~\eqref{condnu1}, then
 \begin{equation}
     \label{eqn:main_Lebesgue2-1}
      \EE\big[\nu\big(([X_1\cdots X_N]C+r B_2^n)^{\circ}\big) \big] \leq
   \EE\big[\nu\big(([Z_1\cdots Z_N]C +r B_2^n)^{\circ}\big) \big] 
  \end{equation}
where $Z_1, \ldots,Z_N$ are independent random vectors distributed
according to $\lambda_{D_n}$.  Moreover if $\nu$ is of the
form~\eqref{condnu2}, we also have that for every $t\ge0$,
  \begin{equation}
    \label{eqn:main_Lebesgue2-2}
    \PP \big[\nu\big(([X_1\cdots X_N]C+r B_2^n)^{\circ}\big)\ge t\big] \leq
    \PP \big[\nu\big(([Z_1\cdots Z_N]C +r B_2^n)^{\circ}\big)\ge t\big].
  \end{equation}

\end{itemize}
\end{theorem}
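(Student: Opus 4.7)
The plan is to prove both parts by combining Christ's rearrangement inequality (Theorem~\ref{thm:GCC->max}) with the shadow-system estimate of Corollary~\ref{cor:GCCviaCG}, then to obtain part (ii) from part (i) via a Jensen/stochastic-dominance argument that exploits the unconditional structure of $C$.

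For part (i), I would first handle the distributional inequality~\eqref{eqn:main_Lebesgue1-2} under assumption~\eqref{condnu2}: the density of $\nu$ is then $-1/(n+1)$-concave and rotationally symmetric, so Corollary~\ref{cor:GCCviaCG} gives that, for every $\theta\in S^{n-1}$ and every $y_1,\dots,y_N\in\theta^\bot$, the map $(t_1,\dots,t_N)\mapsto\nu\bigl(([y_1+t_1\theta\cdots y_N+t_N\theta]C+rB_2^n)^\circ\bigr)^{-1}$ is even and convex on $\R^N$. Fixing $t>0$ and setting $F(\vec x)=1_{\{\nu(([x_1\cdots x_N]C+rB_2^n)^\circ)\ge t\}}$, the restriction $F_{\theta,Y}$ is the indicator of a sub-level set of an even convex function, hence even and quasi-concave; Theorem~\ref{thm:GCC->max} then yields~\eqref{eqn:main_Lebesgue1-2}. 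To reach~\eqref{eqn:main_Lebesgue1-1} under the weaker assumption~\eqref{condnu1}, I would use the layer-cake representation $\rho(s)=\int_0^\infty 1_{\{\rho>u\}}(s)\,du$. Since $\rho$ is non-increasing, $\{\rho>u\}=[0,R(u))$, and by Fubini $\nu=\int_0^\infty\nu_{R(u)}\,du$ where $\nu_R(A)=|A\cap RB_2^n|$. Each $\nu_R$ has density $1_{RB_2^n}$, which is $-1/(n+1)$-concave and rotationally symmetric, so the previous argument applied to $F_R(\vec x)=\nu_R(([x_1\cdots x_N]C+rB_2^n)^\circ)$ (whose restrictions are reciprocals of even convex functions) yields $\EE\,\nu_R(K_X^\circ)\le\EE\,\nu_R(K_{X^*}^\circ)$; integrating over $u$ gives~\eqref{eqn:main_Lebesgue1-1}.

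For part (ii), by part (i) I may assume each $X_i$ has a radially symmetric, radially decreasing density $f_i\in\mathcal P_n$. The crucial use of unconditionality is the \emph{ideal} property: if $c\in C$ and $|d_i|\le|c_i|$ for every $i$, then $d\in C$ (obtained coordinate by coordinate, since $(c_1,\dots,-c_i,\dots,c_N)\in C$ by unconditionality and every $d_i\in[-|c_i|,|c_i|]$ is a convex combination of $c_i$ and $-c_i$). Consequently, for any $|\lambda|\le1$ one has $[x_1\cdots\lambda x_i\cdots x_N]C\subseteq[x_1\cdots x_N]C$, so $F(\vec x):=\nu(([x_1\cdots x_N]C+rB_2^n)^\circ)$ is non-increasing in $|x_i|$ along every ray, separately in each coordinate $i$. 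I then replace the densities one coordinate at a time. Fix $i$ and the remaining $x_j$ ($j\ne i$); for $\omega\in S^{n-1}$ and $\rho\ge0$, set $\phi_\omega(\rho)=F(\ldots,\rho\omega,\ldots)$ (non-increasing in $\rho$) and $\Psi_\omega(R)=\int_0^R\phi_\omega(\rho)\rho^{n-1}\,d\rho$. A direct calculation shows that $t\mapsto\Psi_\omega(t^{1/n})$ is concave on $[0,\infty)$, since its derivative equals $\phi_\omega(t^{1/n})/n$, non-increasing in $t$. Writing the layer cake $f_i(\rho)=\int_0^1 1_{[0,R_i(s))}(\rho)\,ds$ (valid because $\|f_i\|_\infty\le1$) and using that $\int f_i=1$ forces $\int_0^1 R_i(s)^n\,ds=1/\omega_n=R_{D_n}^n$, Jensen's inequality yields
\[
\int_0^\infty\phi_\omega(\rho)f_i(\rho)\rho^{n-1}\,d\rho
=\int_0^1\Psi_\omega(R_i(s))\,ds\le\Psi_\omega(R_{D_n})
=\int_0^{R_{D_n}}\phi_\omega(\rho)\rho^{n-1}\,d\rho.
\]
Integration over $\omega\in S^{n-1}$ gives $\int F\,f_i(x_i)\,dx_i\le\int F\,1_{D_n}(x_i)\,dx_i$; iterating over $i=1,\ldots,N$ yields~\eqref{eqn:main_Lebesgue2-1}. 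The distributional bound~\eqref{eqn:main_Lebesgue2-2} follows by the same argument with $F=1_{\{\nu((\cdots)^\circ)\ge t\}}$, which is likewise non-increasing in $|x_i|$ on each ray.

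The subtlest step is the Jensen argument: one has to recognize that the condition $\|f_i\|_\infty\le1$, $\int f_i=1$ translates into the equality $\int_0^1 R_i(s)^n\,ds=R_{D_n}^n$, and that the relevant concavity of $\Psi_\omega$ is in the variable $R^n$ (rather than in $R$). This is precisely the point where the specific Euclidean ball $D_n$ of volume one emerges as the extremizer.
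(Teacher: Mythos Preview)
Your proof is correct and tracks the paper's argument closely. Part~(i) is essentially identical: invoke Corollary~\ref{cor:GCCviaCG} to verify the even/quasi-concave hypothesis of Theorem~\ref{thm:GCC->max} for the indicator $1_{\{\nu((\cdots)^\circ)\ge t\}}$ when $\nu$ is of type~\eqref{condnu2}, and then recover~\eqref{eqn:main_Lebesgue1-1} for general~\eqref{condnu1} by writing $\nu$ as a superposition of the measures $|\,\cdot\cap RB_2^n|$ via the layer-cake of $\rho$.

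In part~(ii) the structure also matches the paper: unconditionality of $C$ gives coordinate-wise monotonicity of $F$ along rays, and one replaces each radial density by $1_{D_n}$ one variable at a time. The only difference is how the resulting one-dimensional step is handled. The paper packages it as Lemma~\ref{fact} and proves it by the elementary sign identity
\[
\int_0^\infty f(r)\bigl(1_{[0,r_n]}(r)-g(r)\bigr)r^{n-1}\,dr
=\int_0^\infty\bigl(f(r)-f(r_n)\bigr)\bigl(1_{[0,r_n]}(r)-g(r)\bigr)r^{n-1}\,dr\ge 0,
\]
which needs only that $g$ be radial with values in $[0,1]$ and $\int g=1$ (no radial decrease). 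Your route --- concavity of $t\mapsto\Psi_\omega(t^{1/n})$ and Jensen against the layer-cake $f_i(\rho)=\int_0^1 1_{[0,R_i(s))}(\rho)\,ds$ --- is equally valid and arguably more conceptual, since it makes explicit why the constraint $\int_0^1 R_i(s)^n\,ds=R_{D_n}^n$ singles out $D_n$; it does, however, use the radial decrease of $f_i$ (to get intervals as level sets), which you have already secured via step~(i). Both arguments yield the same conclusion with comparable effort.
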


\begin{proof} (i) Let $G$ and $F$ be defined on $(\R^n)^N$ by
  \begin{equation}
    \label{eqn:F}
    G(x_1,\ldots,x_N) = \nu(([x_1\cdots x_N]C+r B_2^n)^{\circ})\quad {\rm and}\quad F=1_{\{G>\alpha\}}
  \end{equation}
  Let $\theta\in S^{n-1}$ and $Y=(y_1,\ldots,y_N)\subset
  (\theta^{\perp})^N$ and let $F_{\theta,Y}$ and $G_{\theta,Y}$ be the
  restrictions of $F$ and $G$ as in (\ref{eqn:F_Y}) with $z=\theta$.
  Note that $F_{\theta,Y}=1_{\{G_{\theta,Y}>\alpha\}}$.
  
 Assume first that $\nu$ is of the form~\eqref{condnu2}.  The
 rotational invariance and the convexity assumption on the density
 ensure that the assumptions of Corollary~\ref{cor:GCCviaCG} are
 satisfied. Thus $G_{\theta,Y}^{-1}$ is even and convex on
 $\R^N$. Hence $G_{\theta,Y}$ and therefore $F_{\theta,Y}$ are
 quasi-concave and even.  Thus we can apply Theorem~\ref{thm:GCC->max}
 to the function $F$ and obtain~\eqref{eqn:main_Lebesgue1-2}.

Next, if $\nu$ satisfies the weaker assumption~\eqref{condnu1}, we
start by applying the previous result in the case of Lebesgue measure
restricted to an Euclidean ball of radius $R>0$ (the density
$1_{RB_2^n}$ is $+\infty$-concave and therefore
$-1/(n+1))$-concave). To condense the notation, we will write $[x_i]$
rather than $[x_1\cdots x_N]$. We have
\begin{equation}\label{eqn:balls}
\forall t>0, \quad \Probb{}{\abs{([X_i]C +r B_2^n)^{\circ}\cap
    RB_2^n}>t} \ls \Probb{}{\abs{([X_i^*]C +r B_2^n)^{\circ}\cap
    RB_2^n} > t}.
        \end{equation} With the notation~\eqref{condnu1}, note that
for $t\in (0, \rho(0))$, the set $\{\rho\ge t\}$ is an Euclidean ball,
open or closed, but the difference is of Lebesgue measure zero, so we
will take later closed balls; we denote by $R(t)$ the corresponding
radius.  By Fubini, for any Borel set $A\subset
\R^n$,  we can write
\[
\nu(A)=\int_0^{+\infty}|A\cap\{\rho\ge t\}|\, dt = \int_0^{\rho(0)} |A\cap R(t) B_2^n|\, dt,
\]
which gives
\begin{multline} \label{eqn:trick}
  \EE \nu(([X_i]C+r B_2^n)^{\circ}) = \EE \int_0^{+\infty} |([X_i]C+r
  B_2^n)^{\circ}\cap R(t)B_2^n|dt\\ = \int_0^{+\infty}\int_0^{+\infty}
  \Probb{}{\abs{([X_i]C +r B_2^n)^{\circ}\cap R(t)B_2^n}>\alpha}
  d\alpha dt
\end{multline}
 Thus~\eqref{eqn:main_Lebesgue1-1} follows from~\eqref{eqn:balls}.\\

(ii) After this step, we have arrived to radially decreasing
 probability distributions. It remains to go to the uniform
 distributions on $D_n$, namely to the
 inequalities~\eqref{eqn:main_Lebesgue2-1}
 and~\eqref{eqn:main_Lebesgue2-2} in the case where $C$ is an
 unconditional convex body. Note that in this case, the functions $G$
 and $F$ defined above are coordinate-wise decreasing in the sense
 that
\begin{multline}\label{condF}
\forall x_1, \ldots, x_N\in\R^n, \quad  (0\ls s_i\ls t_i , \ \forall i\ls N) \\
\Longrightarrow F(s_1x_1, \ldots,s_N x_N)\gr F(t_1x_1,\ldots,t_Nx_N).
\end{multline}
This follows from the fact that, for such $s_i$'s and $t_i$'s, the
unconditionality of $C$ implies that, for every
$x_1,\dots,x_N\in\R^n$,
\[
[s_1x_1\cdots s_Nx_N]C\subset [t_1x_1\cdots t_Nx_N]C.
\]
Then we can apply the following fact
from~\cite[Prop. 3.5]{PaoPiv_smallball}, for which we recall a proof
below for completeness.

\begin{lemma}[\cite{PaoPiv_smallball}] \label{fact}
Let $F:(\R^n)^N\to \R^+$ be a function that satisfies the
condition~\eqref{condF}. If $g_1, \ldots, g_N:\R^+\to [0,1]$ are
nonnegative, bounded by $1$, integrable functions with $ \int_{\R^n}
g_i(|x|) \, dx = 1$ for all $i=1, \ldots N$, then
\begin{multline*}
\int_{(\R^n)^N} F(x_1, \ldots, x_N) \prod_{i=1}^N g_i(|x_i|)\, dx_1 \ldots dx_N \\
\le 
\int_{(\R^n)^N} F(x_1, \ldots, x_N) \prod_{i=1}^N 1_{[0, r_n]}(|x_i|)\, dx_1 \ldots dx_N 
\end{multline*}
where $r_n$ is the radius of $D_n$.
\end{lemma}
We can therefore apply this lemma in the case where $g_i$ is the law
of $X_i^\ast$ which satisfies the assumptions (the fact that the
density of $X_i$ is bounded by $1$ implies indeed that its radial
rearrangement is also bounded by $1$) and to our function
$F$~\eqref{eqn:F} which now satisfies~\eqref{condF}. This yields
$$
   \PP \big[\nu\big(([X_i^\ast]C+r B_2^n)^{\circ}\big)\ge t\big] \leq
    \PP \big[\nu\big(([Z_i]C +r B_2^n)^{\circ}\big)\ge t\big].
    $$ Combined with~\eqref{eqn:main_Lebesgue1-2}, we arrive
    at~\eqref{eqn:main_Lebesgue2-2}. Note that in the proof of (ii)
    we have not exploited the fact that the $g_i$'s are radially
    decreasing, only the fact that they are radial.

To get~\eqref{eqn:main_Lebesgue2-1} for the larger class of measure
$\nu$, we can either use~\eqref{eqn:main_Lebesgue1-1} and the fact
above applied to the function $G$, or else deduce it
from~\eqref{eqn:main_Lebesgue2-2} with the same
trick~\eqref{eqn:trick} as above.

\end{proof}

\begin{proof}[Proof of Lemma~\ref{fact}]
Using Fubini, we see that it is enough to treat each coordinate one
after the other, and so the fact boils down to the following $N=1$
dimensional statement:
$$\int_{\R^n} F(x) g(|x|) \, dx \le \int_{\R^n} F(x)
1_{[0,r_n]}(|x|)\, dx $$ when $g$ is a nonnegative function bounded by
$1$ with $\int_{\R^n} g(|x|)\, dx = 1$, and $F$ is an even function on
$\R^n$ satisfying $F(s x) \ge F(x)$ for all $x\in \R^n$ and $s\in
[0,1]$. This property of $F$ implies that the function $r\to F(r
x_0)$ is decreasing on $\R^+$ for any fixed $x_0\in \R^n$.  Therefore,
by integration in polar coordinates, we see that it suffices to prove
that
$$\int_{0}^{+\infty} f(r) g(r) \, r^{n-1}\, dr \le\int_0^{r_n} f(r) \,
r^{n-1}\, dr $$ when $f$ is a decreasing function and $g$ has values
in $[0,1]$ with $\int_{0}^{+\infty} g(r) \, r^{n-1}\, dr=\int_0^{r_n}
r^{n-1}\, dr $. This is now standard. Denote $\alpha(r):=
(1_{[0,r_n]}(r) - g(r))r^{n-1}$, and observe that
$$\int_0^{+\infty} f(r)\, \alpha(r) \, dr = \int_0^{+\infty}
(f(r)-f(r_n))\, \alpha(r) \, dr \ge 0$$ since the integrand in the
second integral is point-wise nonnegative.
\end{proof}

\section{Applications}
\label{section:apps}  

Here we present some applications of our random theorems to
deterministic geometric inequalities using the law of large
numbers. In particular, we give a more rigorous argument than that
sketched in the introduction on how to recover Blaschke-Santal\'o type
inequalities.

The following result shows that we can pass to the limit in our main
statement when there is almost-sure convergence in the Hausdorff
metric.

\begin{theorem}
  \label{thm:apps}
  Let $(X_i)$ and $(Z_i)$ be sequences of independent random vectors
  in $\R^n$ with each $X_i$ distributed according to the same fixed
  $\mu\in \mathcal{P}_n$ and each $Z_i$ according to $\lambda_{D_n}$.
  Assume that $C_N,C_{N+1},\ldots$ are unconditional convex bodies
  with $C_N\subset \R^N$, $N=n,n+1,\ldots$, such that
  \begin{equation}
    [X_1\cdots X_N]C_N \text{ converges } \otimes_{i=1}^{\infty}\mu
    \text{-a.s. in $\delta^H$}
  \end{equation} and 
  \begin{equation}
    [Z_1\cdots Z_N]C_N \text{ converges }
    \otimes_{i=1}^{\infty}\lambda_{D_n}\text{-a.s. in $\delta^H$}.
  \end{equation} Then, if $\nu$ is a measure 
  on $\R^n$ with a spherically-symmetric, decreasing density, we have
  \begin{equation}
    \label{eqn:main_limit}
    \EE \nu\left( \left(\lim_{N\rightarrow
      \infty}[X_1\cdots X_N]C_N\right)^{\circ}\right) \ls
    \EE \nu\left( \left(\lim_{N\rightarrow
      \infty}[Z_1\cdots Z_N]C_N\right)^{\circ}\right).
  \end{equation}
\end{theorem}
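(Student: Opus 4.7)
The plan is to apply Theorem~\ref{thm:main} at each finite $N$ with $C = C_N$ and then pass to the limit $N \to \infty$ using the assumed almost-sure Hausdorff convergence. Since $\nu$ need not be a finite measure, a truncation is required. For $R > 0$, let $\nu_R$ denote the measure with density $\rho(|x|)\mathbf{1}_{[0,R]}(|x|)$, which is still of the form~\eqref{condnu1}. Theorem~\ref{thm:main} applied to $\nu_R$ and $C = C_N$ gives, for every $N$,
\begin{equation*}
\EE\bigl[\nu_R(([X_1\cdots X_N]C_N)^\circ)\bigr] \le \EE\bigl[\nu_R(([Z_1\cdots Z_N]C_N)^\circ)\bigr].
\end{equation*}

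Write $K_N^X := [X_1\cdots X_N]C_N$ and $K_N^Z := [Z_1\cdots Z_N]C_N$, with a.s. Hausdorff limits $K^X$ and $K^Z$ respectively. By Lemma~\ref{lemma:Hausdorff}(i), $(K_N^X)^\circ \to (K^X)^\circ$ and $(K_N^Z)^\circ \to (K^Z)^\circ$ almost surely in Hausdorff distance. Now, whenever a limit $A$ lies in $\mathcal{K}^n_{\circ}$, Hausdorff convergence $A_N \to A$ implies $(1-\eps)A \subset A_N \subset (1+\eps)A$ eventually, for every $\eps > 0$; absolute continuity of $\nu_R$ then yields $\nu_R(A_N) \to \nu_R(A)$. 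Since the quantities $\nu_R((K_N^X)^\circ)$ and $\nu_R((K_N^Z)^\circ)$ are uniformly bounded by the finite constant $\nu_R(\R^n)$, dominated convergence permits interchanging limit and expectation, giving
\begin{equation*}
\EE\bigl[\nu_R((K^X)^\circ)\bigr] \le \EE\bigl[\nu_R((K^Z)^\circ)\bigr].
\end{equation*}

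Finally, $\nu_R \uparrow \nu$ monotonically as $R \uparrow \infty$, so monotone convergence on each side delivers exactly~\eqref{eqn:main_limit}.

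I expect the only real technical point to be the passage from Hausdorff convergence of the polars to convergence of their $\nu_R$-measures: this relies on the limits $K^X$ and $K^Z$ almost surely containing the origin in their interior, which is implicit in the hypotheses. If on some event the limit fails to lie in $\mathcal{K}^n_{\circ}$, its polar is unbounded, and on that event the corresponding $\nu$-measure is $+\infty$, so the inequality continues to hold trivially after restricting to the complementary event where both limits are well-behaved. Apart from this bookkeeping, the argument is a routine combination of Theorem~\ref{thm:main}, Lemma~\ref{lemma:Hausdorff}(i), and standard truncation/monotone convergence.
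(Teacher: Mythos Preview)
Your truncation approach (replace $\nu$ by the finite measure $\nu_R$, pass to the limit in $N$, then let $R\to\infty$ by monotone convergence) is a legitimate alternative to the paper's route and is arguably more economical, since it uses only Theorem~\ref{thm:main}. The paper instead perturbs the \emph{bodies}: it works with $[X_1\cdots X_N]C_N+\eps B_2^n$, which is precisely why Theorem~\ref{thm:main_Lebesgue} is formulated with the extra summand $rB_2^n$. Adding $\eps B_2^n$ forces all the sets and their Hausdorff limits to lie in $\mathcal{K}^n_\circ$ and furnishes the uniform bound $\nu(\eps^{-1}B_2^n)$ for dominated convergence; one then sends $\eps\downarrow 0$ (monotonically) at the very end. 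So the two proofs regularize on opposite sides---you truncate the measure, the paper fattens the sets.

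There is, however, a genuine error in your treatment of the degenerate case. If the limit $K^X$ fails to lie in $\mathcal{K}^n_\circ$ on a set of positive probability, then $\nu((K^X)^\circ)=+\infty$ there, which inflates the \emph{left-hand} side of \eqref{eqn:main_limit}; this certainly does not make the inequality hold ``trivially'', and restricting to the complementary event is not permissible (there is no reason $K^Z$ should degenerate on the same event, as the two sequences are built from different randomness). The correct fix, within your framework, is to note that the convergence $\nu_R((K_N)^\circ)\to\nu_R(K^\circ)$ does not in fact require $K\in\mathcal{K}^n_\circ$: Hausdorff convergence $K_N\to K$ gives $h_{K_N}\to h_K$ pointwise, hence $1_{(K_N)^\circ}\to 1_{K^\circ}$ Lebesgue-a.e.\ (the boundary of the convex set $K^\circ$, which has non-empty interior since $K$ is bounded, has Lebesgue measure zero), and then dominated convergence against the bounded, compactly supported density of $\nu_R$ yields the claim directly. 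With this correction your argument is complete.
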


To prove the theorem, we will need the following lemma.

\begin{lemma}
  \label{lemma:converge}
  Let $\nu$ be a measure 
  on $\R^n$ with a spherically-symmetric, decreasing density. Then $\nu$ is
  continuous on $\mathcal{K}^n_{\circ}$ with respect to $\delta^H$.
\end{lemma}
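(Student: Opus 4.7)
The plan is to reduce the continuity statement to a dominated convergence argument applied to $\nu(K_N) = \int 1_{K_N}(x)\, \rho(|x|)\, dx$. Two ingredients are needed: (a) a uniform integrable dominator for the sequence $\rho(|\cdot|)\, 1_{K_N}(\cdot)$, and (b) $\nu$-almost everywhere convergence of $1_{K_N}$ to $1_K$.

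For (a), I would observe that Hausdorff convergence $K_N \to K$ yields an index $N_0$ with $K_N \subset K + B_2^n$ for $N \geq N_0$, so the whole tail of the sequence lives in a fixed Euclidean ball $R B_2^n$. Since by hypothesis $\rho : [0, +\infty) \to [0, +\infty)$ is decreasing, we have $\rho \leq \rho(0) < +\infty$, so $\rho(|x|)\, 1_{R B_2^n}(x)$ is bounded with compact support, hence an integrable dominator.

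The substantive step is (b): I would show that $1_{K_N}(x) \to 1_K(x)$ for every $x \notin \partial K$, and then use that the boundary of a convex body has Lebesgue measure zero and therefore $\nu$-measure zero by absolute continuity. For $x \in \mathrm{int}(K)$, I inscribe a small simplex $\mathrm{conv}(v_0, \dots, v_n) \subset K$ whose interior contains $x$; Hausdorff closeness produces points $w_i^N \in K_N$ with $|w_i^N - v_i| < \eps$, and for $\eps$ small enough, convexity gives $x \in \mathrm{conv}(w_0^N, \dots, w_n^N) \subset K_N$. For $x \notin K$, strict hyperplane separation combined with the inclusion $K_N \subset K + \eps B_2^n$ for large $N$ ensures $x \notin K_N$ eventually.

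Combining (a) and (b), dominated convergence delivers $\nu(K_N) \to \nu(K)$, which is the asserted continuity. The only genuinely nontrivial point is (b), which amounts to the standard observation that Hausdorff-convergent convex bodies converge pointwise (as indicators) off the boundary of the limit body; everything else is bookkeeping, made smooth by the fact that $\rho(0) < +\infty$ rules out any blow-up of the density near the origin and the Hausdorff convergence automatically keeps the sets in a common bounded region.
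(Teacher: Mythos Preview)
Your argument is correct, but it takes a different route from the paper's. The paper first localizes to a fixed compact set (as you do), and then approximates the density $\rho$ uniformly by a finite step function $\sum_{j=1}^M a_j\,1_{r_j B_2^n}$ whose level sets are concentric Euclidean balls; this reduces the question to the continuity of $K_N \mapsto |K_N \cap r_j B_2^n|$, which is handled by the cited Lemma~\ref{lemma:Hausdorff}(ii) on $\delta^H$-continuity of intersections together with the continuity of Lebesgue measure under Hausdorff convergence. Your dominated-convergence argument bypasses this decomposition entirely, working directly with the pointwise a.e.\ convergence of $1_{K_N}$ to $1_K$. Your route is more self-contained and in fact more general: it nowhere uses the radial symmetry of $\nu$, only that the density is bounded (so that a compactly supported dominator exists) and that $\nu \ll |\cdot|$ (so that $\partial K$ is $\nu$-null). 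The paper's argument, by contrast, exploits that the superlevel sets of a radially decreasing density are balls, which is exactly what makes Lemma~\ref{lemma:Hausdorff}(ii) applicable.
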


\begin{proof} 
We can restrict ourselves to continuity for sets included in some
compact set. Then by uniform approximation, we may assume that the
density $f_{\nu}=\frac{d\nu}{dx}$ of $\nu$ is of the form:
  \begin{equation*}
    f_{\nu}(x) = \sum_{j=1}^M a_j {1}_{r_j B_2^n}(x) \quad
    (x\in \R^n)
  \end{equation*}where $a_j>0$, $j=1,\ldots,M$ and $r_1> r_2> \ldots> r_M>0$.
  Suppose now that $K,K_1,K_2,\ldots\in \mathcal{K}^n_{\circ}$ and
  $\delta^H(K_N,K)\rightarrow 0$ as $N\rightarrow \infty$. Then, as
  $N\rightarrow \infty$,
  \begin{equation}
    \nu(K_N) = \sum_{j=1}^M a_j \abs{K_N\cap(r_j B_2^n)}
    \rightarrow \sum_{j=1}^M a_j \abs{K\cap(r_j B_2^n)}
    = \nu(K)
  \end{equation}by Lemma \ref{lemma:Hausdorff}.
\end{proof}

\begin{proof}[Proof of Theorem \ref{thm:apps}]
  Let $\eps >0$.  Note that $$[X_1\cdots X_N]C_N+\eps B_2^n\supseteq
  \eps B_2^n,$$ hence
  $$\nu\left(([X_1\cdots X_N]C_N+\eps B_2^n)^{\circ}\right)\ls
  \nu(\eps^{-1}B_2^n)$$ for each $N\gr n$; the same holds for
  $Z_1,\ldots, Z_N$.  By dominated convergence, Lemmas
  \ref{lemma:Hausdorff}, \ref{lemma:converge} and Theorem
  \ref{thm:main_Lebesgue}, we have
  \begin{eqnarray*}
    \lefteqn{\EE\nu\left((\lim_{N\rightarrow
        \infty}[X_1\cdots X_N]C_N+\eps B_2^n)^{\circ}\right)}\\ & = &
    \EE\lim_{N\rightarrow \infty}\nu\left(([X_1\cdots
      X_N]C_N+\eps B_2^n)^{\circ}\right) \\ & = & \lim_{N\rightarrow \infty}
    \EE\nu\left(([X_1\cdots X_N]C_N+\eps
    B_2^n)^{\circ}\right) \\ & \ls &
    \lim_{N\rightarrow \infty} \EE\nu\left(([Z_1\cdots Z_N]C_N+\eps
    B_2^n)^{\circ}\right) \\ & = &\EE
    \lim_{N\rightarrow \infty}\nu\left(([Z_1\cdots Z_N]C_N+\eps
    B_2^n)^{\circ}\right) \\ & = &
    \EE\nu\left((\lim_{N\rightarrow
      \infty}[Z_1\cdots Z_N]C_N+\eps
    B_2^n)^{\circ}\right). 
  \end{eqnarray*}
  If $\EE\nu\left((\lim_{N\rightarrow \infty}[Z_1\cdots
    Z_N]C_N)^{\circ}\right)=\infty$,~(\ref{eqn:main_limit}) is
  trivial. Otherwise, since
  \begin{equation*}
    \lim_{N\rightarrow \infty}[Z_1\cdots Z_N]C_N +\eps B_2^n \supseteq
        \lim_{N\rightarrow \infty}[Z_1\cdots Z_N]C_N,
  \end{equation*}we have
  \begin{equation*} \nu\left((\lim_{N\rightarrow
      \infty}[Z_1\cdots Z_N]C_N+\eps B_2^n)^{\circ}\right)\ls
    \nu\left((\lim_{N\rightarrow \infty}[Z_1\cdots
      Z_N]C_N)^{\circ}\right)
    \end{equation*} 
for each $\eps >0$. Thus we can appeal to dominated convergence once
more and let $\eps\rightarrow 0$ to conclude the proof.
\end{proof}
%To be explicit, the theorem says that if $([X_1\cdots X_N]C) $C(\mu)$
%is a (random) convex body

Recall that given a measure $\mu\in \mathcal{P}_n$ and $p\gr 1$, the
$L_p$-centroid body $Z_p(\mu)$ of $\mu$ is the convex body with
support function
  \begin{equation*}
    h_{Z_p(\mu)}(y) = \left(\int_{\R^n} \abs{\langle x, y \rangle}^p
    d\mu(x)\right)^{1/p} \quad (y\in \R^n).
  \end{equation*}
  Such bodies were originally defined for compact star-shaped sets
  rather than measures, under an alternate normalization, in
  \cite{LZ_BS}.
    
  If the $X_i$'s are sampled according to $\mu$, then 
  \begin{equation}
    \label{eqn:Lpcentroid}
    Z_p(\mu) = \lim_{N\rightarrow \infty}N^{-1/p} [X_1\cdots
      X_N]B_q^N,
    \end{equation}
where $1/p+1/q=1$, and convergence occurs a.s. in $\delta^H$; the
latter follows from the law of large numbers (see
\cite{PaoPiv_probtake}). In particular, if $K$ is an origin-symmetric
convex body, then
\begin{equation}
 K = \lim_{N\rightarrow \infty}[X_1\cdots X_N]B_1^N =
 \lim_{N\rightarrow \infty}\conv{\{\pm X_1,\ldots,\pm X_N\}},
\end{equation}
where $X_1,X_2,\ldots$ are independent random vectors sampled in $K$
and convergence occurs a.s. in $\delta^H$.

\begin{corollary}
Let 
$\nu$ be a measure 
  on $\R^n$ with a spherically-symmetric, decreasing density.  
Let $\mu\in\mathcal{P}_n$,  $p\gr 1$, and  $Z_p(\mu)$ be the
$L_p$-centroid body of $\mu$.  Then, 
\begin{equation*}
  \nu(Z^{\circ}_p(\mu))\ls \nu(Z^{\circ}_p(\lambda_{D_n})).
\end{equation*}
\end{corollary}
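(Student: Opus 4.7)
The plan is to reduce the inequality directly to Theorem~\ref{thm:apps} by choosing the sequence of unconditional bodies to be appropriately scaled $\ell_q^N$ balls. Specifically, let $q$ be the H\"older conjugate of $p$ (with $q=\infty$ when $p=1$) and set $C_N = N^{-1/p} B_q^N \subset \R^N$. Each $C_N$ is unconditional, since $B_q^N$ is unconditional and dilation by a positive scalar preserves unconditionality.

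I would sample $X_1,X_2,\ldots$ i.i.d.\ from $\mu$ and $Z_1,Z_2,\ldots$ i.i.d.\ from $\lambda_{D_n}$, so that by the identification~\eqref{eqn:Lpcentroid} (a consequence of the strong law of large numbers) one has, a.s.\ in the Hausdorff metric,
$$[X_1\cdots X_N]C_N \;\longrightarrow\; Z_p(\mu), \qquad [Z_1\cdots Z_N]C_N \;\longrightarrow\; Z_p(\lambda_{D_n}).$$
Crucially, the right-hand sides are deterministic convex bodies: $Z_p(\mu)$ and $Z_p(\lambda_{D_n})$ depend only on the laws, not on the samples. Both limits lie in $\mathcal{K}^n_\circ$, since the densities of $\mu$ and $\lambda_{D_n}$ are genuinely $n$-dimensional, so their support functions $h_{Z_p(\cdot)}$ are strictly positive on $S^{n-1}$.

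Next, I would apply Theorem~\ref{thm:apps} with this choice of $(C_N)$ and with the given measure $\nu$. All its hypotheses are satisfied: the $C_N$ are unconditional, we have the a.s.\ Hausdorff convergences above, and $\nu$ has a spherically symmetric decreasing density. The theorem yields
$$\EE\, \nu\!\left(Z_p(\mu)^\circ\right) \;\le\; \EE\, \nu\!\left(Z_p(\lambda_{D_n})^\circ\right).$$
Since both random bodies coincide almost surely with the deterministic bodies $Z_p(\mu)$ and $Z_p(\lambda_{D_n})$, the expectations drop out, and we obtain the claimed inequality $\nu(Z_p^\circ(\mu)) \le \nu(Z_p^\circ(\lambda_{D_n}))$.

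I do not expect a genuine obstacle here: the geometric and probabilistic content has already been absorbed into Theorem~\ref{thm:apps} and the Hausdorff-convergence identification~\eqref{eqn:Lpcentroid}. The only minor verification is that the a.s.\ limits sit inside $\mathcal{K}^n_\circ$ so that the polarity and $\nu$-continuity lemmas (Lemmas~\ref{lemma:Hausdorff} and~\ref{lemma:converge}) may be applied; this follows, as noted, from the fact that neither $\mu$ nor $\lambda_{D_n}$ is concentrated on a proper subspace, forcing their $L_p$-centroid bodies to contain the origin in their interior.
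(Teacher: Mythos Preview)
Your proposal is correct and follows exactly the paper's approach: the paper's entire proof is the single line ``By~\eqref{eqn:Lpcentroid}, Theorem~\ref{thm:apps} applies,'' and you have simply spelled out the details of that application with $C_N = N^{-1/p}B_q^N$.
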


When $\nu$ is Lebesgue measure on $\R^n$ and $\mu$ is the uniform
measure on a compact star-shaped set, the latter result is due to
Lutwak-Zhang \cite{LZ_BS}; a straightforward generalization from
star-shaped sets to measures $\mu$ appears in \cite{Paouris_super}.

\begin{proof} 
  By (\ref{eqn:Lpcentroid}), Theorem \ref{thm:apps} applies. 
\end{proof}

When $\nu$ is not Lebesgue measure, the result is very sensitive to
scaling, since we lose affine invariance. When we drop the volume
normalization, we can still prove the following result.

\begin{corollary}
  \label{newSan}
  Let $K$ be an origin-symmetric convex body in $\R^n$ and suppose
  that $\abs{K}=\abs{t_K B_2^n}$.  Then for any Lebesgue absolutely
  continuous measure $\nu$ with a spherically-symmetric,
  decreasing density, we have
  \begin{equation}
    \nu(K^{\circ})\ls \nu((t_KB_2^n)^{\circ}).
  \end{equation}
\end{corollary}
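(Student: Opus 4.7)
\medskip
\noindent\textbf{Proof proposal for Corollary~\ref{newSan}.} The plan is to apply Theorem~\ref{thm:apps} directly with $C_N = B_1^N$, using the law of large numbers to realize $K$ and $D_n$ as almost sure Hausdorff limits of random symmetric convex hulls, and then to absorb the fact that $\nu$ is not affine invariant into a dilation of the measure.

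First I would treat the special case $|K|=1$, where $t_K B_2^n = D_n$. Let $X_1, X_2, \ldots$ be independent and uniform on $K$; since $|K|=1$ their common density $1_K$ is bounded by $1$, so the law lies in $\mathcal{P}_n$. Let $Z_1, Z_2, \ldots$ be independent and uniform on $D_n$, and take $C_N = B_1^N$, so that $[X_1 \cdots X_N] C_N = \conv\{\pm X_1, \ldots, \pm X_N\}$. The almost sure Hausdorff convergence of these random convex hulls to $K$ and to $D_n$, respectively, is exactly the statement recalled just before the previous corollary. Theorem~\ref{thm:apps} then gives $\nu(K^\circ) \le \nu(D_n^\circ) = \nu((t_K B_2^n)^\circ)$.

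For a general convex body $K$, I would reduce to the volume-one case by rescaling the measure rather than the body. Let $c = |K|^{1/n}$ and set $K_1 = K/c$, so that $|K_1|=1$. Define $\nu_c$ by $\nu_c(A) := \nu(A/c)$; its density is (up to a multiplicative constant) $x \mapsto f(|x|/c)$, where $f$ is the density of $\nu$, and is therefore again spherically symmetric and decreasing. Applying the volume-one case to $K_1$ and $\nu_c$ gives $\nu_c(K_1^\circ) \le \nu_c(D_n^\circ)$, and unwinding using $K_1^\circ = c K^\circ$, $D_n^\circ = \omega_n^{1/n} B_2^n$, and $1/t_K = (\omega_n/|K|)^{1/n}$ produces the desired inequality $\nu(K^\circ) \le \nu((t_K B_2^n)^\circ)$.

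The main obstacle, to the extent there is one, is bookkeeping the scaling correctly: because $\nu$ carries no affine invariance, a naive rescaling of $K$ alone would distort the comparison with the ball. The key observation is that the class of measures with spherically symmetric decreasing density is stable under dilations of $\R^n$, so any rescaling of $K$ can be transferred to an equivalent rescaling of $\nu$ without changing the inequality one wants to apply.
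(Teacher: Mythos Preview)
Your proof is correct and is essentially the same as the paper's: both apply Theorem~\ref{thm:apps} with $C_N=B_1^N$ (or a dilate of it) and $X_i$ uniform in the volume-one homothet $\overline K=K/|K|^{1/n}$. The only cosmetic difference is that the paper absorbs the scaling into $C_N$ (using $C_N=|K|^{1/n}B_1^N$) while you absorb it into the measure via $\nu_c$; since the class of admissible $\nu$ is dilation-stable, the two packagings are equivalent.
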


\begin{proof}
  Let $\overline{K} = K/\abs{K}^{1/n}$ be the volume one homothetic
  copy of $K$.  If $X_1,X_2,\ldots$ are independent random vectors
  sampled in $\overline{K}$, then $\overline{K} = \lim_{N\rightarrow
    \infty}[X_1\cdots X_N] B_1^N$ and hence
  \begin{equation*}
    K^{\circ} = \lim_{N\rightarrow \infty}\left([X_1\cdots
      X_N] B_1^N/\abs{K}^{1/n}\right)^{\circ},
  \end{equation*}where the convergence is a.s. in $\delta^H$.
  Similarly, if $Z_1,Z_2,\ldots$ are independent random vectors
  sampled in $D_n$, then we have a.s. convergence in $\delta^H$:
  \begin{equation*}
    D_n^{\circ} = \lim_{N\rightarrow
      \infty}([Z_1\cdots Z_N]B_1^N)^{\circ}.
  \end{equation*}
  Thus
  \begin{eqnarray*}
    \nu(K^{\circ}) & = &\EE
      \nu\left(\lim_{N\rightarrow \infty}([X_1\cdots
        X_N] B_1^N/\abs{K}^{1/n})^{\circ}\right)\\
       & \ls & \EE
      \nu\left(\lim_{N\rightarrow \infty}([X_1\cdots
        X_N] B_1^N/\abs{K}^{1/n})^{\circ}\right)\\
      & = & \nu(\abs{K}^{-1/n}D_n^{\circ})\\
      & = & \nu((t_K B_2^n)^{\circ}).
  \end{eqnarray*}
\end{proof}

\section*{Acknowledgements}

Dario Cordero-Erausquin and Matthieu Fradelizi are supported in part
by the Agence Nationale de la Recherche, project GeMeCoD (ANR 2011
BS01 007 01); Grigoris Paouris by the A. Sloan foundation, BSF grant
2010288 and the US NSF grant CAREER-1151711; Peter Pivovarov by the
University of Missouri Research Board.  Lastly, we are grateful for
the hospitality of Universit\'e Paris VI, which facilitated part of
our collaboration.

%%%%%%%%%%%%%%%%%%%%%%%%%%%%%%%%%%%%%%%%%%%%%%%%%%%%%%%%%%%%%%%%%%%
%\bibliographystyle{amsplain} 
%\bibliography{CG_BIBa}

\bigskip

\noindent
Dario Cordero-Erausquin: Institut de Math\'ematiques de Jussieu-PRG, Universit\'e Pierre et Marie Curie (Paris 6),
75252 Paris Cedex 05, France \\
cordero@math.jussieu.fr

\medskip

\noindent 
Matthieu Fradelizi:
Laboratoire d'Analyse et de Math\'ematiques
Appliqu\'ees, Universit\'e Paris-Est Marne la Vall\'ee, 77454 Marne la Vall\'ee Cedex 2, France \\
matthieu.fradelizi@univ-mlv.fr 

\medskip

\noindent 
Grigoris Paouris:
 Department of Mathematics (Mailstop 3368), Texas A\&M University,
College Station, TX 77843-3368, USA\\
grigoris@math.tamu.edu 

\medskip

\noindent 
Peter Pivovarov: Mathematics Department,
University of Missouri,
Columbia, MO 65211, USA\\
pivovarovp@missouri.edu

\end{document}